


\documentclass[reqno,11pt]{amsart}
\usepackage{tikz}
\usetikzlibrary{snakes}
\setlength{\textwidth}{14.5cm}
\addtolength{\oddsidemargin}{-0.75cm}
\addtolength{\evensidemargin}{-0.75cm}

\newtheorem{theorem}{Theorem}[section]
\newtheorem{corollary}[theorem]{Corollary}
\newtheorem{proposition}[theorem]{Proposition}

\newtheorem{lemma}[theorem]{Lemma}

\newtheorem{definition}[theorem]{Definition}

\theoremstyle{remark}
\newtheorem{remark}[theorem]{Remark}

\numberwithin{equation}{section}
\allowdisplaybreaks

\newcommand{\N}{\mathbb N}
\newcommand{\naturals}{\mathbb N}
\newcommand{\Z}{\mathbb Z}

\author[Michael J.\ Schlosser]{Michael J.\ Schlosser$^{*}$}
\address{Fakult\"at f\"ur Mathematik, Universit\"at Wien,
Oskar-Morgenstern-Platz~1, A-1090 Vienna, Austria}
\email{michael.schlosser@univie.ac.at}
\urladdr{http://www.mat.univie.ac.at/{\textasciitilde}schlosse}
\thanks{$^{*}$ Partially supported by FWF Austrian Science Fund
grant F50-08 within the SFB
``Algorithmic and enumerative combinatorics''.}

\author[Meesue Yoo]{Meesue Yoo$^{**}$}
\address{AORC center, Sungkyunkwan University, Suwon,16419, Republic of Korea}
\email{meesue.yoo@skku.edu}
\thanks{$^{**}$ Partially supported by the National Research Foundation of
Korea (NRF) grant funded by the Korea government
No.\ 2016R1A5A1008055 and No.\ 2017R1C1B2005653.}

\input xy
\xyoption{all}

\title[Commutation relations and combinatorial
identities]{Weight-dependent commutation relations\\ and
combinatorial identities}

\subjclass[2010]{Primary 05A30;
Secondary 05E15, 33D15}

\keywords{commutation relations, elliptic weights,
basic hypergeometric series, Weyl algebra, normal ordering, rook theory}

\begin{document}

\begin{abstract}
We derive combinatorial identities for variables satisfying
specific systems of commutation relations, in particular elliptic
commutation relations. The identities thus obtained extend
corresponding ones for $q$-commuting variables $x$ and $y$ satisfying
$yx=qxy$. In particular, we obtain weight-dependent binomial theorems,
functional equations for generalized exponential functions,
we derive results for an elliptic derivative of elliptic commuting variables,
and finally study weight-dependent extensions of the Weyl algebra
which we connect to rook theory.
\end{abstract}

\maketitle
\section{Introduction}
A fundamental question of algebraic combinatorics concerns the study of
connections between algebraic relations and combinatorics. For instance,
the well-studied $q$-commutation relation $yx=qxy$
can be interpreted in terms of weighted lattice paths.
The algebraic expression ``$xy$''
would refer to a path going one step east, then one step north,
while ``$yx$'' would refer to a path with first step north and
second step east. Keeping track of the area ``covered'' by the path
(i.e., the number of square units below the path) and assigning the weight of 
a path $P$ to be $q^{a}$ where $a$ is the area of $P$,
we see that the weight of the path $yx$ is $q$ whereas the weight
of $xy$ is $1$, or, with other words, the path $yx$ has an additional
weight $q$ compared to the path $xy$.
The commutation relation $yx=qxy$ describes exactly the
change of the weights when the two steps are interchanged.

Our purpose here is to carry out a similar analysis
with even more general weights (which depend on the position
of the steps of the path), hereby continuing with a study
that has been commenced by one of us in \cite{Schl1}.
In this work we establish new noncommutative extensions
of the binomial theorem, functional equations for generalized
exponentials, propose an elliptic derivative, and study
weight-dependent extensions of the Weyl algebra which we
connect to rook theory.

Overall, we expect that our findings will not only have applications
to algebraic combinatorics but also to noncommutative analysis,
algebraic geometry, and quantum groups.
This paper is a considerably enhanced and expanded version
(with additional material; in particular on normal orderings)
of an extended abstract published in
the FPSAC'15 proceedings \cite{SY2}.

We would like to thank the reviewers for their careful reading and comments.

\section{Noncommutative weight-dependent binomial theorem}

The following material is taken from the first author's paper \cite{Schl1}.
Let $\mathbb{N}$ and $\mathbb{N}_0$ denote the sets of positive and
nonnegative integers, respectively. 

\begin{definition}
For a doubly-indexed sequence of indeterminates $(w(s,t))_{s,t\in\mathbb{N}}$,
let $\mathbb{C}_w [x,y]$ be the associative unital algebra over $\mathbb{C}$
generated by $x$ and $y$, satisfying the following three relations :
\begin{subequations}\label{eqn:noncommrel}
\begin{align}
yx &= w(1,1)xy,\\
x w(s,t)&= w(s+1,t)x,\\
y w(s,t)&= w(s,t+1)y,
\end{align}
\end{subequations}
for all $s,t\in\mathbb N$. 
\end{definition}

For $s\in\mathbb{N}$ and $t\in \mathbb{N}_0$, we define 
\begin{equation}
W(s,t):= \prod_{j=1}^t w(s,j),
\end{equation}
the empty product being defined to be $1$.
Note that for $s,t\in\mathbb{N}$, we have $w(s,t)=W(s,t)/W(s,t-1)$.
We refer to the $w(s,t)$ as {\em small weights}, whereas to the
$W(s,t)$ as {\em big weights} (or {\em column weights}).

Let the \emph{weight-dependent binomial coefficients} be defined by 
\begin{subequations}\label{wbineq}
\begin{align}\label{recu}
&{}_{\stackrel{\phantom w}{\stackrel{\phantom w}w}}\!\!
\begin{bmatrix}0\\0\end{bmatrix}=1,\qquad
{}_{\stackrel{\phantom w}{\stackrel{\phantom w}w}}\!\!
\begin{bmatrix}n\\k\end{bmatrix}=0
\qquad\text{for\/ $n\in\naturals_0$,\, and\/
$k\in-\naturals$ or $k>n$},\\
\intertext{and}
\label{recw}
&{}_{\stackrel{\phantom w}{\stackrel{\phantom w}w}}\!\!
\begin{bmatrix}n+1\\k\end{bmatrix}=
{}_{\stackrel{\phantom w}{\stackrel{\phantom w}w}}\!\!
\begin{bmatrix}n\\k\end{bmatrix}
+{}_{\stackrel{\phantom w}{\stackrel{\phantom w}w}}\!\!
\begin{bmatrix}n\\k-1\end{bmatrix}
\,W(k,n+1-k)
\qquad \text{for $n,k\in\naturals_0$}.
\end{align}
\end{subequations}

These weight-dependent binomial coefficients have a
combinatorial interpretation in terms of \emph{weighted lattice paths},
see \cite{Schl0}. Here, a lattice path is a sequence of north (or vertical)
and east (or horizontal) steps in the first quadrant of the $xy$-plane,
starting at the origin $(0,0)$ and ending at say $(n,m)$.
We give weights to such paths by assigning the big weight $W(s,t)$
to each east step $(s-1,t)\rightarrow (s,t)$ and $1$ to each north step.
Then define the weight of a path $P$, $w(P)$, to be the product of the
weight of all its steps. 

Given two points $A,B\in\naturals_0^2$,
let $\mathcal{P}(A\rightarrow B)$ be the set of all lattice paths
from $A$ to $B$, and define 
\begin{displaymath}
w(\mathcal{P}(A\rightarrow B)):= \sum_{P\in \mathcal{P}(A\rightarrow B)}w(P).
\end{displaymath}
Then we have 
\begin{equation}
w(\mathcal{P}((0,0)\rightarrow (k,n-k)))=
{}_{\stackrel{\phantom w}{\stackrel{\phantom w}w}}\!\!
\begin{bmatrix}n\\k\end{bmatrix}\label{eqn:wbin}
\end{equation}
as both sides of the equation satisfy the same recursion
and initial condition as in \eqref{wbineq}.

Interpreting the $x$-variable as an east step and the $y$-variable
as a north step, we get the following weight dependent binomial theorem.

\begin{theorem}[\cite{Schl1}]\label{thm:binomial}
Let $n\in\mathbb{N}_0$. Then, as an identity in $\mathbb{C}_w[x,y]$,
\begin{equation}\label{eqn:binomial}
(x+y)^n =\sum_{k=0}^n
{}_{\stackrel{\phantom w}{\stackrel{\phantom w}w}}\!\!
\begin{bmatrix}n\\k\end{bmatrix}x^k y^{n-k}.
\end{equation}
\end{theorem}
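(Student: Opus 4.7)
The natural approach is induction on $n$. The base case $n=0$ is immediate since both sides equal $1$. For the inductive step, multiply the hypothesis on the right by $(x+y)$ to obtain
\begin{equation*}
(x+y)^{n+1} = \sum_{k=0}^n {}_w\!\begin{bmatrix}n\\k\end{bmatrix} x^k y^{n-k} x \;+\; \sum_{k=0}^n {}_w\!\begin{bmatrix}n\\k\end{bmatrix} x^k y^{n-k+1}.
\end{equation*}
The second sum is already in the desired normal form; the first requires moving the trailing $x$ first past $y^{n-k}$ and then through $x^k$.

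The key subsidiary identity to establish is
\begin{equation*}
x^k y^m x \;=\; W(k+1,m)\, x^{k+1} y^m.
\end{equation*}
I would prove this in two stages. First, by induction on $m$, show $y^m x = W(1,m)\, x y^m$: the base $m=0$ is trivial, and for the step one combines the initial relation $yx = w(1,1)xy$ with the rule $y\,w(s,t) = w(s,t+1)\,y$, which shifts the $t$-index of every factor that the leading $y$ passes through. Second, iterating the rule $x\,w(s,t) = w(s+1,t)\,x$ yields $x^k W(1,m) = W(k+1,m)\, x^k$. Composing these two moves produces the claimed identity.

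Substituting this back into the inductive expression and shifting the summation index $k \mapsto k+1$ in the first sum, one arrives at
\begin{equation*}
(x+y)^{n+1} = \sum_{k=0}^{n+1}\left({}_w\!\begin{bmatrix}n\\k\end{bmatrix} + {}_w\!\begin{bmatrix}n\\k-1\end{bmatrix} W(k, n+1-k)\right) x^k y^{n+1-k},
\end{equation*}
and the bracketed coefficient matches exactly the recursion \eqref{recw} defining ${}_w\!\begin{bmatrix}n+1\\k\end{bmatrix}$, closing the induction.

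The main point of care, rather than any deep obstacle, is the bookkeeping of subscripts on the weights as they migrate past $x$'s and $y$'s, together with the need to verify that the factor $W(k+1,n-k)$ emerges \emph{to the right} of the binomial coefficient, in precise agreement with the recursion \eqref{recw}. (It is tacitly assumed that the indeterminates $w(s,t)$ commute among themselves, so that $W(s,t)$ is unambiguously defined.) A more combinatorial alternative would expand $(x+y)^n$ over its $2^n$ monomials in $x,y$, show that reducing each such word to normal form produces the weight of the corresponding lattice path, and then invoke \eqref{eqn:wbin}; the induction above, however, is shorter and avoids setting up an explicit bijection.
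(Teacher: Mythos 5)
Your induction is correct: the two-stage lemma $y^m x = W(1,m)\,xy^m$ and $x^k W(1,m) = W(k+1,m)\,x^k$ follows cleanly from the defining relations \eqref{eqn:noncommrel}, and the resulting coefficient identity is precisely the recursion \eqref{recw}, with the boundary cases $k=0$ and $k=n+1$ handled by \eqref{recu}. This is, however, not the route the paper takes: the theorem is imported from \cite{Schl1} without a written proof, and the surrounding text makes clear that the intended justification is the combinatorial one you relegate to your final sentence --- expand $(x+y)^n$ into $2^n$ words, observe that normal-ordering a single word produces exactly the weight $w(P)$ of the corresponding lattice path (each east step at position $(k,m)\to(k+1,m)$ contributing the big weight $W(k+1,m)$, which is exactly the content of your key identity $x^k y^m x = W(k+1,m)\,x^{k+1}y^m$), and then invoke \eqref{eqn:wbin}. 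The two arguments are close cousins: your case split on whether the last factor of $(x+y)^{n+1}$ is $x$ or $y$ is the algebraic shadow of splitting paths by their last step, which is how \eqref{eqn:wbin} itself is established. What your induction buys is brevity and self-containedness, avoiding the bijection between words and paths; what the combinatorial route buys is the stronger intermediate statement that \emph{each individual monomial} normal-orders to its path weight, which is the viewpoint the paper reuses later (e.g.\ in the normal-ordering results of Section~4). Your parenthetical remark that the $w(s,t)$ must be taken to commute among themselves is a fair observation about an assumption the paper leaves tacit.
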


Interesting specializations of Theorem~\ref{thm:binomial}
other than the
(classical) binomial theorem and the $q$-binomial theorem include
results involving complete and elementary symmetric functions,
and such involving balanced, well-poised and elliptic weights.
(All of these are featured in \cite{Schl1}.)
Here we shall pay particular attention to the elliptic case;
the corresponding binomial theorem is stated in \eqref{eqn:ab_binom}.
In the following we briefly explain the elliptic setting.


\section{Elliptic weights}

A function is defined to be {\em elliptic} if it is meromorphic and
doubly periodic. It is well known (cf.\ e.g.\ \cite{W}) that
elliptic functions can be built from quotients of
theta functions.

We define the \emph{modified Jacobi theta function} with
\emph{nome} $p$ by
\begin{equation*}
\theta(x;p)=\prod_{j\ge 0}\big((1-p^jx)(1-p^{j+1}/x)\big),\qquad\quad
|p|<1.
\end{equation*}
We write
\begin{equation*}
\theta(x_1,\dots,x_m;p)=\theta(x_1;p)\dots\theta(x_m;p)
\end{equation*}
for products of these functions.
These satisfy the \emph{inversion formula}
\begin{subequations}
\begin{equation}\label{p1id}
\theta(x;p)=-x\theta(1/x; p),
\end{equation}
the \emph{quasi-periodicity property}
\begin{equation}
\theta(px; p)=-\frac{1}{x}\theta(x;p),
\end{equation}
and the \emph{addition formula}
\begin{equation}\label{addf}
\theta(xy,x/y,uv,u/v;p)-\theta(xv,x/v,uy,u/y;p)
=\frac uy\,\theta(yv,y/v,xu,x/u;p)
\end{equation}
\end{subequations}
(cf.\ \cite[p.~451, Example 5]{WhW}).

For nome $p$ with $|p|<1$, \emph{base} $q$, two independent variables
$a$ and $b$, and $(s,t)\in\Z^2$, we define the
\emph{small elliptic weights} to be
\begin{subequations}\label{eqn:ellwt}
\begin{equation}
w_{a,b;q,p}(s,t)=\frac{\theta(aq^{s+2t},bq^{2s+t-2},aq^{t-s-1}/b;p)}
{\theta(aq^{s+2t-2},bq^{2s+t},aq^{t-s+1}/b;p)}q,\label{eqn:ellipticwt}
\end{equation}
and the \emph{big elliptic weights} to be
\begin{equation}
W_{a,b;q,p}(s,t)=\frac{\theta(aq^{s+2t},bq^{2s},bq^{2s-1},
aq^{1-s}/b,aq^{-s}/b;p)}
{\theta(aq^{s},bq^{2s+t},bq^{2s+t-1},aq^{t-s+1}/b,aq^{t-s}/b;p)}q^t.
\label{eqn:ellipticbigwt}
\end{equation}
\end{subequations}
Notice that for $t\ge 0$ we have
\begin{equation*}
W_{a,b;q,p}(s,t)=\prod_{k=1}^t w_{a,b;q,p}(s,k).
\end{equation*}
Observe that
\begin{subequations}
\begin{equation}\label{wshift}
w_{a,b;q,p}(s,k+n)=w_{aq^{2k},bq^k;q,p}(s,n),
\end{equation}
and 
\begin{equation}\label{Wshift}
W_{a,b;q,p}(s,k+n)=W_{a,b;q,p}(s,k)\,
W_{aq^{2k},bq^k;q,p}(s,n),
\end{equation}
\end{subequations}
for all $s$, $k$ and $n$,
which are elementary identities we will make use of.

The terminology ``elliptic'' for the above small and big weights
is indeed justified, as the small weight $w_{a,b;q,p}(s,k)$ (and also the
big weight) is elliptic in each of its parameters (i.e., these weights are
even ``totally elliptic''). Writing $q=e^{2\pi i\sigma}$,
$p=e^{2\pi i\tau}$, $a=q^\alpha$ and $b=q^\beta$ with complex $\sigma$,
$\tau$, $\alpha$, $\beta$, $s$ and $k$, then the small weight
$w_{a,b;q,p}(s,k)$ is clearly periodic in $\alpha$ with period $\sigma^{-1}$.
Also, using \eqref{p1id}, we can see that
$w_{a,b;q,p}(k)$ is also periodic in $\alpha$ with period $\tau\sigma^{-1}$.
The same applies to $w_{a,b;q,p}(k)$ as a function in $\beta$ (or $k$)
with the same two periods $\sigma^{-1}$ and $\tau\sigma^{-1}$.

Next, we define (cf.\  \cite[Ch.\ 11]{GRhyp})
the {\em theta shifted factorial}
(or {\em $q,p$-shifted factorial}), by
\begin{equation*}
(a;q,p)_n = \begin{cases}
\prod^{n-1}_{j=0} \theta (aq^j;p),& n = 1, 2, \ldots\,,\cr
1,& n = 0,\cr
1/\prod^{-n-1}_{j=0} \theta (aq^{n+j};p),& n = -1, -2, \ldots,
\end{cases}
\end{equation*}
and write
\begin{equation*}
(a_1, \ldots, a_m;q, p)_n = (a_1;q,p)_n\ldots(a_m;q,p)_n,
\end{equation*}
for their products.
For $p=0$ we have  $\theta (x;0) = 1-x$ and, hence, $(a;q, 0)_n = (a;q)_n
=(1-a)(1-aq)\dots(1-aq^{n-1})$
is a {\em $q$-shifted factorial} in base $q$.

Now, the \emph{elliptic binomial coefficients} \cite{Schl1}
\begin{equation}\label{ellbin}
\begin{bmatrix}n\\k\end{bmatrix}_{a,b;q,p}:=
\frac{(q^{1+k},aq^{1+k},bq^{1+k},aq^{1-k}/b;q,p)_{n-k}}
{(q,aq,bq^{1+2k},aq/b;q,p)_{n-k}},
\end{equation}
together with the big elliptic weights defined in \eqref{eqn:ellipticbigwt},
can be easily seen to satisfy the recursion \eqref{wbineq}.

In fact, the elliptic binomial coefficients in \eqref{ellbin}
generalize the familiar $q$-binomial coefficients, which can be obtained
by letting $p\to 0$, $a\to 0$, then $b\to 0$, in this order.
These are defined by 
\begin{equation*}
\begin{bmatrix}n\\k\end{bmatrix}_q:=
\frac{(q^{1+k};q)_{n-k}}
{(q;q)_{n-k}},\quad\;\text{where}\quad(a;q)_n= 
\begin{cases}
\prod^{n-1}_{j=0} (1-aq^j),& n = 1, 2, \ldots\,,\cr
1,& n = 0,\cr
1/\prod^{-n-1}_{j=0} (1-aq^{n+j}),& n = -1, -2, \ldots.
\end{cases}
\end{equation*}
(Similarly to above, one writes
$(a_1, \ldots, a_m;q)_n = (a_1;q)_n\ldots(a_m;q)_n,$
for products.)
The $q$-binomial coefficients satisfy the symmetry
\begin{equation*}
\begin{bmatrix}n\\k\end{bmatrix}_q=\begin{bmatrix}n\\n-k\end{bmatrix}_q
\end{equation*}
(which is not satisfied by the elliptic binomial coefficients),
and the two recurrence relations
\begin{align*}
\begin{bmatrix}n+1\\k\end{bmatrix}_q&=
\begin{bmatrix}n\\k\end{bmatrix}_q+
\begin{bmatrix}n\\k-1\end{bmatrix}_q q^{n+1-k},\\
\begin{bmatrix}n+1\\k\end{bmatrix}_q&=
\begin{bmatrix}n\\k\end{bmatrix}_q q^k+
\begin{bmatrix}n\\k-1\end{bmatrix}_q.
\end{align*}
 
Basic hypergeometric series and $q$-series are covered with great detail
in  Gasper and Rahman's textbook \cite{GRhyp};
elliptic hypergeometric series are studied there in Chapter~11.

Let $x,y,a,b$ be four variables with $ab=ba$ and $q,p$ be two complex numbers
with $|p|<1$. We define $\mathbb{C}_{a,b;q,p}[x,y]$ to be the
unital associative algebra over $\mathbb{C}$, generated by $x$ and $y$,
satisfying the following commutation relations 
\begin{subequations}\label{abqxy}
\begin{align}
yx &= \frac{\theta(aq^3,bq,a/bq;p)}{\theta(aq,bq^3,aq/b;p)}qxy,\label{eqn:xy}\\
x\,f(a,b)&= f(aq,bq^2)x,\label{eqn:x}\\
y\,f(a,b)&=  f(aq^2,bq)y,\label{eqn:y}
\end{align}
\end{subequations}
where $f(a,b)$ is any function that is
multiplicatively $p$-periodic in $a$ and $b$
(i.e., which satisfies $f(pa,b)=f(a,pb)=f(a,b)$).

We refer to the variables $x,y,a,b$ forming $\mathbb{C}_{a,b;q,p}[x,y]$
as \emph{elliptic-commuting} variables. The algebra 
$\mathbb{C}_{a,b;q,p}[x,y]$ formally reduces to $\mathbb{C}_q[x,y]$
if one lets $p\to 0$, $a\to 0$, then $b\to 0$ (in this order),
while, having eliminated the nome $p$, relaxing the two conditions
of multiplicative $p$-periodicity.

In $\mathbb{C}_{a,b;q,p}[x,y]$ the following binomial theorem holds
as a consequence of Theorem~\ref{thm:binomial}
(cf.\cite{Schl1}):
\begin{equation}\label{eqn:ab_binom}
(x+y)^n =\sum_{k=0}^n \begin{bmatrix}n\\k\end{bmatrix}_{a,b;q,p}x^k y^{n-k}.
\end{equation}

We can derive the following identity.
\begin{proposition}\label{propbinthm}
For any constant $c$ independent from $a,b$ and $x$,
we have, as an identity in $\mathbb{C}_{a,b;q,p}[x,y]$,
\begin{align}
\prod_{\overleftarrow{k=0}}^{n-1}(1-W_{a,b;q,p}(1,k)cx)
=\sum_{k=0}^n &(-c)^k q^{\binom k2}
\begin{bmatrix}n\\k\end{bmatrix}_{a,b;q,p} \notag
\frac{(aq^n ;q,p)_k}
{(aq^{n-k+1};q,p)_k}\frac{(bq;q,p)_k}{(bq^k;q,p)_k}\\
&\times\frac{(a/b;q^{-1},p)_k(aq/b;q,p)_{n-k}}{(aq^{n-1}/b;q^{-1},p)_k
(aq^{1-k}/b;q,p)_{n-k}}x^k,
\end{align}
where the product 
of noncommuting factors is carried out from right to left
(as the left arrow $\leftarrow$ indicates).
\end{proposition}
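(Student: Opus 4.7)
The plan is to prove the identity by induction on $n$. The base case $n=0$ is immediate, as both sides reduce to $1$.

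For the inductive step, I would split off the outermost factor,
\begin{equation*}
\prod_{\overleftarrow{k=0}}^{n}(1-W_{a,b;q,p}(1,k)cx)
=(1-W_{a,b;q,p}(1,n)cx)\prod_{\overleftarrow{k=0}}^{n-1}(1-W_{a,b;q,p}(1,k)cx),
\end{equation*}
apply the inductive hypothesis to the rightmost product, and use the commutation relation \eqref{eqn:x} in the form $x\,f(a,b)=f(aq,bq^2)x$ to slide the single $x$ coming from the outer factor past the scalar coefficients to the right. This introduces a parameter shift $(a,b)\mapsto(aq,bq^2)$ in those coefficients. After relabeling the summation index in the resulting second sum, comparing the coefficients of $(-c)^kx^k$ on the two sides reduces the inductive step to the single scalar identity
\begin{align*}
&q^{\binom k2}\begin{bmatrix}n\\k\end{bmatrix}_{a,b;q,p}A_k(a,b;n)
+q^{\binom{k-1}2}W_{a,b;q,p}(1,n)\begin{bmatrix}n\\k-1\end{bmatrix}_{a,b;q,p}A_{k-1}(aq,bq^2;n)\\
&\qquad =q^{\binom k2}\begin{bmatrix}n+1\\k\end{bmatrix}_{a,b;q,p}A_k(a,b;n+1),
\end{align*}
where $A_k(a,b;n)$ denotes the product of theta-shifted factorial ratios that multiplies the elliptic binomial coefficient on the right-hand side of the Proposition.

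To verify this scalar identity, I would expand $\begin{bmatrix}n+1\\k\end{bmatrix}_{a,b;q,p}$ using the weight-dependent binomial recursion \eqref{recw}, clear denominators, and reduce the resulting equality to a pair of balanced products of modified theta functions. This can then be dispatched by the inversion formula \eqref{p1id}, the quasi-periodicity of $\theta$, and, where necessary, the addition formula \eqref{addf}.

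The main obstacle is this final verification: keeping track of the $q^{\pm1}$-shifted factorials (in particular the $(a/b;q^{-1},p)_k$ factors), together with the $(a,b)\mapsto(aq,bq^2)$ shifts produced by the commutation relation, makes the bookkeeping delicate rather than conceptually deep. An attractive alternative would be to first degenerate $p\to 0$ and then $a,b\to 0$, recovering Gauss' $q$-binomial theorem, and to lift back by the \emph{elliptic principle}, exploiting that both sides are totally elliptic in their parameters; but making this lift rigorous still requires counting periods and theta-degrees, so the direct induction seems the cleanest.
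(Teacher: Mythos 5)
Your strategy is exactly the one the paper uses: the paper's entire proof reads ``induction on $n$, using the addition formula \eqref{addf} in the inductive step, details left to the reader,'' and your outline fills in how that induction is organized (peel off the leftmost factor $1-W_{a,b;q,p}(1,n)cx$, push the new $x$ through the coefficients via \eqref{eqn:x}, compare coefficients of $x^k$). One concrete correction, though: the commutation relation shifts the \emph{entire} coefficient sitting to the right of the new $x$, not just the factor you call $A_{k-1}$. In your displayed scalar identity the middle term should therefore read
\begin{equation*}
q^{\binom{k-1}2}\,W_{a,b;q,p}(1,n)\begin{bmatrix}n\\k-1\end{bmatrix}_{aq,bq^2;q,p}A_{k-1}(aq,bq^2;n),
\end{equation*}
with the elliptic binomial coefficient also evaluated at $(aq,bq^2)$; since $\begin{bmatrix}n\\k-1\end{bmatrix}_{a,b;q,p}$ is not invariant under that substitution, the identity as you wrote it is false and the final verification would not close. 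With that fix, matching against the recursion \eqref{recw} (whose second term carries $W_{a,b;q,p}(k,n+1-k)$ at \emph{unshifted} parameters) is precisely where the shift identities \eqref{wshift}--\eqref{Wshift} and the addition formula come in, and the rest is the delicate but routine bookkeeping you describe. Your remark that the $p,a,b\to0$ degeneration plus ``total ellipticity'' is not by itself a rigorous lift is well taken; the direct induction is indeed the intended route.
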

\begin{proof}
This is readily proved by induction on $n$ where in the inductive step
one makes use of the addition formula \eqref{addf}.
We leave the details to the reader.
\end{proof}
For $p\to 0$, followed by $a\to 0$ and $b\to 0$ (in this order),
and $x\mapsto x/c$,
Proposition~\ref{propbinthm} reduces to the (commutative version of the)
$q$-binomial theorem:
\begin{equation*}
(x;q)_n=\sum_{k=0}^n (-1)^k q^{\binom k2}
\begin{bmatrix}n\\k\end{bmatrix}_q x^k.
\end{equation*}
 

\subsection{The $p,a\rightarrow 0$ case}

\subsubsection{The $b;q$-binomial theorem}

If one lets $p\to 0$ and then $a\rightarrow 0$ in
$\eqref{eqn:ellwt}$, then the corresponding weights are 
\begin{subequations}
\begin{align}
w_{0,b;q}(s,t)&=\frac{(1-bq^{2s+t-2})}{(1-bq^{2s+t})}q,\\
W_{0,b;q}(s,t)&=\frac{(1-bq^{2s})(1-bq^{2s-1})}
{(1-bq^{2s+t})(1-bq^{2s+t-1})}q^t.
\end{align}
\end{subequations}

Then, in the unital algebra $\mathbb{C}_{0,b;q}[x,y]$ over $\mathbb{C}$
defined by the following three relations 
\begin{subequations}
\begin{align}
yx&= \frac{(1-bq)}{(1-bq^3)}qxy,\label{eqn:b,yx}\\
xb&= q^2 bx,\label{eqn:b,xb}\\
yb&= qby,\label{eqn:b,yb}
\end{align}
\end{subequations}
the following binomial theorem holds : 
\begin{equation}\label{eqn:b-binomial}
(x+y)^n =\sum_{k=0}^n \begin{bmatrix}n\\k\end{bmatrix}_{0,b;q}x^k y^{n-k},
\end{equation}
where 
\begin{equation}\label{eqn:bqbincoeff}
\begin{bmatrix}n\\k\end{bmatrix}_{0,b;q}=
\frac{(q^{1+k},bq^{1+k};q)_{n-k}}{(q,bq^{1+2k};q)_{n-k}}.
\end{equation}
In \eqref{eqn:b-binomial}, by interchanging $k$ and $n-k$
and using the relation 
\begin{displaymath}
x^l y^k =\frac{(bq^{1+k};q)_{2l}}{(bq;q)_{2l}}q^{-kl}y^k x^l,
\end{displaymath}
which can be proved by induction on $l$ and on $k$,
we get 
\begin{align*}
(x+y)^n &= \sum_{k=0}^n \frac{(q^{1+n-k},bq^{1+n-k};q)_k}{(q,bq^{1+2n-2k};q)_k}
x^{n-k}y^k\\
&= \sum_{k=0}^n \frac{(q^{1+n-k},bq^{1+n-k};q)_k}{(q,bq^{1+2n-2k};q)_k}
\times \frac{(bq^{1+k};q)_{2n-2k}}{(bq;q)_{2n-2k}}q^{-k(n-k)}y^k x^{n-k}\\
&=\sum_{k=0}^n \frac{(q;q)_n}{(q;q)_k (q;q)_{n-k}}
\frac{(bq;q)_n}{(bq;q)_k (bq;q)_{n-k}}q^{k(k-n)}y^k x^{n-k}.
\end{align*}
Also, by induction we can derive the following identity 
\begin{equation}
\prod_{\overrightarrow{k=0}}^{n-1}
\left( by +\frac{(bq^2;q^{-1})_k}{(b;q^{-1})_k}x\right)
=\sum_{k=0}^n \begin{bmatrix}n\\k\end{bmatrix}_{0,b;q}
\frac{(bq^{k+2};q)_{n-1}}{(bq^2;q)_{n-1}}q^{k(k-n)}x^k (by)^{n-k},
\end{equation}
where the product (containing the right arrow $\rightarrow$)
is carried out from left to right.

\subsubsection{Identities for $b;q$-exponentials}
Let us define the $b;q$-exponential by 
\begin{equation}\label{bqexp}
e_{b;q}(z):=\sum_{n=0}^{\infty}\frac{1}{(q;q)_n (bq;q)_n}z^n.
\end{equation}
Note that when $b\to 0$, we recover the usual $q$-exponential
(cf.\ \cite[Appendix II, Equation~(II.1)]{GRhyp})
\begin{equation}\label{qpexp}
e_q (z)=e_{0;q}(z)=\sum_{n=0}^{\infty}\frac{z^n}{(q;q)_n}=
\frac{1}{(z;q)_\infty}.
\end{equation}
The following result generalizes the well-known identity for
$q$-exponentials, $e_q(x+y)=e_q(x)e_q(y)$ (a.k.a.\ 
\emph{Cauchy identity})
which was first observed by Sch\"utzenberger~\cite{Sb}. 

\begin{proposition}
In the algebra $\mathbb{C}_{0,b;q}[x,y]$, we have 
\begin{equation}\label{eqn:exp}
e_{b;q}(x+y)=e_{b;q}(x)e_{b;q}(y).
\end{equation}
\end{proposition}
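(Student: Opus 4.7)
The plan is to expand both sides as explicit double sums in the monomial basis $\{x^k y^{n-k}\}$ and check that the coefficients match term-by-term. On the left-hand side I would substitute the binomial theorem \eqref{eqn:b-binomial} into the definition \eqref{bqexp}:
\begin{equation*}
e_{b;q}(x+y)=\sum_{n=0}^\infty\frac{1}{(q;q)_n(bq;q)_n}
\sum_{k=0}^n\begin{bmatrix}n\\k\end{bmatrix}_{0,b;q}x^k y^{n-k}.
\end{equation*}
Then, using the two elementary factorizations $(q^{1+k};q)_{n-k}=(q;q)_n/(q;q)_k$ and $(bq;q)_n=(bq;q)_k(bq^{1+k};q)_{n-k}$ together with the cancellation $(bq^{1+k};q)_{n-k}/(bq^{1+2k};q)_{n-k}$ embedded in the binomial coefficient \eqref{eqn:bqbincoeff}, the coefficient of $x^k y^{n-k}$ collapses to
\begin{equation*}
\frac{1}{(q;q)_k(bq;q)_k(q;q)_{n-k}(bq^{1+2k};q)_{n-k}}.
\end{equation*}

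On the right-hand side I would write out the Cauchy product
\begin{equation*}
e_{b;q}(x)e_{b;q}(y)=\sum_{j,l\ge 0}\frac{1}{(q;q)_j(bq;q)_j}\,x^j\,\frac{1}{(q;q)_l(bq;q)_l}\,y^l,
\end{equation*}
and then push $x^j$ past the coefficient of $y^l$. By the commutation relation \eqref{eqn:b,xb} we have $xf(b)=f(q^2b)x$ for any rational function $f$, and iterating gives $x^j(bq;q)_l=(bq^{1+2j};q)_l x^j$. Hence the coefficient of $x^j y^l$ becomes
\begin{equation*}
\frac{1}{(q;q)_j(bq;q)_j(q;q)_l(bq^{1+2j};q)_l}.
\end{equation*}
Reindexing with $n=j+l$ and $k=j$ yields exactly the same expression as the one obtained on the left, proving the identity.

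I expect no real obstacle beyond careful bookkeeping: the whole proof is a check that the $b$-shift produced by moving $x^j$ through the $y$-series is precisely the $b$-shift that the binomial coefficient \eqref{eqn:bqbincoeff} absorbs into $(bq^{1+2k};q)_{n-k}$. The only subtle point is to remember that one must commute the $b$-dependent coefficients (not just the $y$-variables) past the powers of $x$, since in $\mathbb{C}_{0,b;q}[x,y]$ the variable $b$ is not central; once this is done, no use of nontrivial identities such as the addition formula \eqref{addf} is required.
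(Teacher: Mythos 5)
Your proposal is correct and follows essentially the same route as the paper: both expand $e_{b;q}(x+y)$ via the $b;q$-binomial theorem, reduce the coefficient of $x^k y^{n-k}$ to $1/\bigl((q,bq;q)_k\,(q;q)_{n-k}(bq^{1+2k};q)_{n-k}\bigr)$, and identify this with the normally ordered Cauchy product of $e_{b;q}(x)e_{b;q}(y)$ using the commutation relation \eqref{eqn:b,xb} to move powers of $x$ past the $b$-dependent coefficients. The only difference is presentational (you normalize both sides to a common form rather than transforming one into the other), and your emphasis on $b$ not being central is exactly the point the paper flags at the end of its proof.
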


\begin{proof}
We apply the $b;q$-binomial theorem in \eqref{eqn:b-binomial}
to expand $(x+y)^n$ in $e_{b;q}(x+y)$.
Then the left-hand side of \eqref{eqn:exp} can be written as 
\begin{align*}
e_{b;q}(x+y)&=\sum_{n=0}^{\infty}\frac{1}{(q;q)_n (bq;q)_n}(x+y)^n\\
&=\sum_{n=0}^{\infty}\frac{1}{(q;q)_n (bq;q)_n}
\sum_{k=0}^n \begin{bmatrix}n\\k\end{bmatrix}_{0,b;q}x^k y^{n-k}\\
&=\sum_{n=0}^{\infty}\frac{1}{(q;q)_n (bq;q)_n}
\sum_{k=0}^n\frac{(q;q)_n}{(q;q)_k(q;q)_{n-k}}
\frac{(bq;q)_n(bq;q)_{2k}}{(bq;q)_k (bq;q)_{n+k}}x^k y^{n-k}\\
&=\sum_{n=0}^{\infty}\sum_{k=0}^n\frac{1}{(q;q)_k (q;q)_{n-k}
(bq;q)_k(bq^{1+2k};q)_{n-k}}x^k y^{n-k}\\
&=\sum_{n=0}^{\infty}\sum_{k=0}^n\frac{1}{(q,bq;q)_k}x^k
\frac{1}{(q,bq;q)_{n-k}}y^{n-k}\\
&=e_{b;q}(x)e_{b;q}(y).
\end{align*}
Note that we used the relation \eqref{eqn:b,xb}
to shift $x$'s and $b$'s. 
\end{proof}

We shall now look for a product formula for $e_{b;q}(x)$.
\begin{remark}
Let us set 
\begin{equation*}
F_{b;q}(x)=\sum_{n\ge 0}\frac{1}{(q,bq;q)_n}x^n.
\end{equation*}
Then we can verify that $F_{b;q}(x)$ satisfies the following two relations 
\begin{subequations}
\begin{eqnarray}
F_{b;q}(x)-F_{b;q}(qx)&=&\frac{1}{(1-bq)}xF_{b/q;q}(x),\label{eqn:F1}\\
F_{b;q}(x)-bF_{b;q}(qx)&=&(1-b)F_{b/q;q}(x).\label{eqn:F2}
\end{eqnarray}
\end{subequations}
Combining \eqref{eqn:F1} and \eqref{eqn:F2} to
eliminate $F_{b/q;q}(x)$ gives 
\begin{equation*}
\left( 1-\frac{1}{(1-bq)(1-bq^2)}x\right)F_{b;q}(x)=
\left(1-\frac{bq^2}{(1-bq)(1-bq^2)}x \right)F_{b;q}(qx),
\end{equation*}
or 
\begin{equation}\label{eqn:F12}
F_{b;q}(x)=\left( 1-\frac{1}{(1-bq)(1-bq^2)}x\right)^{-1}
\left(1-\frac{bq^2}{(1-bq)(1-bq^2)}x \right)F_{b;q}(qx).
\end{equation}
By iterating \eqref{eqn:F12}, we get 
\begin{equation}
F_{b;q}(x)=\prod_{\overrightarrow{k=0}}^{\infty}
\left[\left( 1-\frac{1}{(1-bq)(1-bq^2)}xq^k\right)^{-1}
\left(1-\frac{bq^2}{(1-bq)(1-bq^2)}xq^k \right) \right],
\end{equation}
where the product 
of noncommuting factors is carried out from left to right
as $k$ increases. This gives a product form for
the $b;q$-exponential $e_{b;q}(x)$. 
\end{remark}

If we put $y\frac{1}{1-b}$ for $z$ in the original $q$-exponential, we get 
\begin{displaymath}
e_q \left( y\frac{1}{1-b}\right) =\sum_{n=0}^{\infty}
\frac{1}{(q;q)_n}\left( y\frac{1}{1-b}\right)^n=
\sum_{n=0}^{\infty}\frac{1}{(q;q)_n (bq;q)_n}y^n =e_{b;q}(y).
\end{displaymath}
Hence, by the product formula for the usual $q$-exponential
in \eqref{qpexp}, we have 
\begin{equation*}
e_{b;q}(y)=\frac{1}{(y(1-b)^{-1};q)_\infty}
\end{equation*}
when the multiplication on the right-hand side in the infinite product
is done from left to right.
Let $u=-xy^{-1}(1-bq)$ and $v=y(1-b)^{-1}$.
Then these two new variables satisfy 
\begin{equation*}
vu=quv.
\end{equation*}
Thus, by the properties of the $q$-exponential (cf.\ \cite{K})
\begin{subequations}
\begin{align}
e_q (u) e_q (v)&=e_q (u+v),\label{eqn:qexp1}\\
e_q (v) e_q (u)&= e_q(u)e_q(-uv)e_q(v),\label{eqn:qexp2}
\end{align}
\end{subequations}
together with $e_q(v)=e_{b;q}(y)$ and $-uv=x$, we get 
\begin{equation}
e_{b;q}(y)e_q (-xy^{-1}(1-bq))=
e_q(-xy^{-1}(1-bq))e_q (x)e_{b;q}(y).\label{eqn:exp3}
\end{equation}
Now
\begin{align*}
e_q (-xy^{-1}(1-bq))&=\sum_{n=0}^{\infty}\frac{1}{(q;q)_n}(-x y^{-1}(1-bq))^n\\
&=\sum_{n=0}^{\infty}\frac{(bq^2 ;q)_n}{(q;q)_n}(-x y^{-1})^n.
\end{align*}
So, if we define 
\begin{equation}
E_{b;q}(z)=\sum_{k=0}^\infty \frac{(bq^2 ;q)_k}{(q;q)_k}(-z)^k,
\end{equation}
then \eqref{eqn:exp3} can be rewritten as 
\begin{equation}
e_{b;q}(y)E_{b;q}(xy^{-1})=E_{b;q}(xy{^{-1}})e_q (x)e_{b;q}(y).
\end{equation}

\subsection{The $p,b\rightarrow 0$ case}

\subsubsection{The $a;q$-binomial theorem}
If one lets $p\to 0$ and then $b\rightarrow 0$ in
\eqref{eqn:ellwt}, then the corresponding
weights are
\begin{subequations}
\begin{align}
w_{a,0;q}(s,t)&=\frac{(1-aq^{s+2t})}{(1-aq^{s+2t-2})}q^{-1},\\
W_{a,0;q}(s,t)&=\frac{(1-aq^{s+2t})}{(1-aq^{s})}q^{-t}.
\end{align}
\end{subequations}

Then, in the unital algebra $\mathbb{C}_{a,0;q}[x,y]$ over $\mathbb{C}$ defined
by the following three relations
\begin{subequations}
\begin{align}
yx&= \frac{(1-aq^{3})}{(1-aq)}q^{-1}xy,\label{eqn:a,yx}\\
xa&= q ax,\label{eqn:a,xa}\\
ya&= q^{2}ay,\label{eqn:a,ya}
\end{align}
\end{subequations}
the following binomial theorem holds : 
\begin{equation}\label{eqn:a-binomial}
(x+y)^n =\sum_{k=0}^n \begin{bmatrix}n\\k\end{bmatrix}_{a,0;q}x^k y^{n-k},
\end{equation}
where 
\begin{equation}\label{eqn:aqbincoeff}
\begin{bmatrix}n\\k\end{bmatrix}_{a,0;q}=
\frac{(q^{1+k},aq^{1+k};q)_{n-k}}{(q,aq;q)_{n-k}}q^{k(k-n)}.
\end{equation}

\subsection{Elliptic case}

\subsubsection{An elliptic derivative operator}
\begin{definition}
Define operators acting on elements in $\mathbb{C}_{a,b;q,p}[x]$ by
\begin{subequations}
\begin{align}
\mathcal{D}_{a,b;q,p}(c(a,b;q,p)x^n)&=
c(aq^{-1},bq^{-2};q,p)
\begin{bmatrix}n\\n-1\end{bmatrix}_{a,b;q,p}x^{n-1}\notag\\
&=c(aq^{-1},bq^{-2};q,p)
\frac{\theta(q^n,aq^n,bq^n,aq^{2-n}/b;p)}
{\theta(q,aq,bq^{2n-1},aq/b;p)}x^{n-1},\\\label{def:eta}
\eta_{a,b;q,p}(c(a,b;q,p)x^n)&=c(a,b;q,p)
\frac{\theta(aq^{1+n},aq^{2+n},bq,aq^{1-n}/b,aq^{-n}/b;p)}
{\theta(aq,aq^2,bq^{1+2n},aq/b,a/b;p)}q^n x^n,
\end{align}
\end{subequations}
where $c(a,b;q,p)$ is any function depending on $a,b,q$ and $p$
but independent from $x$, being multiplicatively $p$-periodic in $a$ and $b$.
\end{definition}

These operators satisfy the following relation (``Pincherle derivative''):
\begin{equation}
\mathcal{D}_{a,b;q,p}x -x\mathcal{D}_{a,b;q,p}=\eta_{a,b;q,p}.
\end{equation}

More generally, the following holds. 
\begin{proposition} For $k\ge 1$, 
\begin{equation}
\mathcal{D}_{a,b;q,p}^{k}x-x\mathcal{D}_{a,b;q,p}^{k} =
\begin{bmatrix}k\\1\end{bmatrix}_{bq^{2-2k},aq^{1-k};q,p}\,
\mathcal{D}_{a,b;q,p}^{k-1}(\eta_{a,b;q,p}).
\end{equation}
\end{proposition}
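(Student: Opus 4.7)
The plan is to induct on $k$. The base case $k=1$ is immediate from the displayed Pincherle-type relation $\mathcal{D}x - x\mathcal{D} = \eta$, using $\begin{bmatrix}1\\1\end{bmatrix}_{\cdot,\cdot;q,p}=1$. For the inductive step I would use the telescoping
\[
\mathcal{D}^{k+1}x - x\mathcal{D}^{k+1}
= \mathcal{D}\bigl(\mathcal{D}^k x - x\mathcal{D}^k\bigr) + (\mathcal{D}x - x\mathcal{D})\mathcal{D}^k,
\]
apply the inductive hypothesis to the first bracket and the Pincherle relation to the second, and then push the outer $\mathcal{D}$ across the scalar binomial coefficient by observing that $\mathcal{D}$ acts on any coefficient $c(a,b;q,p)$ multiplying a monomial $x^m$ by the shift $a\mapsto aq^{-1}$, $b\mapsto bq^{-2}$. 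This gives
\[
\mathcal{D}^{k+1}x - x\mathcal{D}^{k+1}
= \eta\,\mathcal{D}^k + \begin{bmatrix}k\\1\end{bmatrix}_{bq^{-2k},aq^{-k};q,p}\mathcal{D}^k\eta.
\]

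The crux is the scalar operator identity
\[
\eta_{a,b;q,p}\,\mathcal{D}^k = W_{bq^{-2k},aq^{-k};q,p}(1,k)\,\mathcal{D}^k\eta_{a,b;q,p},
\]
which by linearity need only be verified on $x^n$. Writing $\mathcal{D}^k(x^n)=F_{n,k}(a,b)\,x^{n-k}$ (the scalar obtained by iterating the definition of $\mathcal{D}$) and $\eta(x^m)=e_m(a,b)\,x^m$ (from \eqref{def:eta}), both sides reduce to $F_{n,k}(a,b)\,x^{n-k}$ times a scalar, so the identity reduces to the theta-function equation
\[
e_{n-k}(a,b) = W_{bq^{-2k},aq^{-k};q,p}(1,k)\,e_n(aq^{-k},bq^{-2k}).
\]
The $n$-dependent factors $\theta(aq^{1+n-k},aq^{2+n-k},aq^{1-n+k}/b,aq^{-n+k}/b;p)$ appear in both $e_{n-k}(a,b)$ and $e_n(aq^{-k},bq^{-2k})$ and cancel, leaving a purely $n$-independent ratio. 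Matching this ratio against the explicit five-theta formula for $W_{bq^{-2k},aq^{-k};q,p}(1,k)$ requires the inversion formula $\theta(x;p)=-x\theta(1/x;p)$ to flip the arguments $b/a$, $bq^{-1}/a$, $bq^{-k}/a$, $bq^{-k-1}/a$ into $a/b$, $aq/b$, $aq^k/b$, $aq^{k+1}/b$, with the resulting powers of $b/a$ combining to give the correct overall power of $q$.

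The induction then closes by invoking the defining recurrence \eqref{wbineq} at $n=k$ and column index $1$, namely
\[
\begin{bmatrix}k+1\\1\end{bmatrix}_{A,B;q,p} = \begin{bmatrix}k\\1\end{bmatrix}_{A,B;q,p} + W_{A,B;q,p}(1,k),
\]
specialized to $A=bq^{-2k}$, $B=aq^{-k}$; since $bq^{-2k}=bq^{2-2(k+1)}$ and $aq^{-k}=aq^{1-(k+1)}$, this matches the statement at level $k+1$. The main obstacle is the theta-function verification of $\eta\,\mathcal{D}^k = W\,\mathcal{D}^k\eta$: it is conceptually clean but demands careful bookkeeping of the five-theta numerators and denominators in $W_{bq^{-2k},aq^{-k};q,p}(1,k)$ together with repeated application of the inversion formula, which is the one place where nothing cancels automatically.
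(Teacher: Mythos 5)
Your proof is correct, but it takes a genuinely different route from the paper. The paper proves the proposition by brute force: it applies both sides of the operator identity to a general element $c(a,b;q,p)x^n$, writes out $\mathcal{D}_{a,b;q,p}^k x$ and $x\mathcal{D}_{a,b;q,p}^k$ explicitly as products of theta quotients, and uses the addition formula \eqref{addf} exactly once to collapse the difference of two four-factor theta products into a single one, which is then matched against the explicitly computed right-hand side. You instead induct on $k$ via the telescoping $\mathcal{D}^{k+1}x-x\mathcal{D}^{k+1}=\mathcal{D}(\mathcal{D}^kx-x\mathcal{D}^k)+(\mathcal{D}x-x\mathcal{D})\mathcal{D}^k$, and the whole weight of the argument is carried by the commutation lemma
\begin{equation*}
\eta_{a,b;q,p}\,\mathcal{D}_{a,b;q,p}^k=W_{bq^{-2k},aq^{-k};q,p}(1,k)\,\mathcal{D}_{a,b;q,p}^{k}\,\eta_{a,b;q,p},
\end{equation*}
together with the $k=1$ instance of the Pascal recurrence \eqref{recw}. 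I checked the lemma: the $n$-dependent theta factors do cancel, and after four applications of the inversion formula \eqref{p1id} the leftover monomial prefactors contribute exactly $q^{-2k}$, turning the $q^k$ in $W$ into the required $q^{-k}$; the parameter shift $a\mapsto aq^{-1}$, $b\mapsto bq^{-2}$ when $\mathcal{D}$ crosses a scalar correctly converts $(bq^{2-2k},aq^{1-k})$ into $(bq^{2-2(k+1)},aq^{1-(k+1)})$, so the induction closes. What your approach buys is structure: it explains where the twisted subscripts $bq^{2-2k},aq^{1-k}$ come from (they are generated step by step by the recurrence \eqref{recw} at column index $1$), it confines all theta-function manipulation to an $n$-independent identity handled by \eqref{p1id} alone, and it relegates the addition formula to the base case, which is the Pincherle relation the paper already displays. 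What it costs is that the base case is itself unproved in the paper (though stated there), whereas the paper's direct computation is self-contained modulo the closed form of $\mathcal{D}^k$ on monomials. Both proofs are valid; yours is arguably the more conceptual of the two.
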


\begin{proof}
We apply both sides of the operator equation to $c(a,b;q,p)x^n$
and verify that the results are the same.
More precisely, 
\begin{align*}
&\mathcal D_{a,b;q,p}^k x(c(a,b;q,p)x^n)=
\mathcal D_{a,b;q,p}^k(c(aq,bq^2;q,p)x^{n+1})\\
&= c(aq^{1-k},bq^{2-2k};q,p)
\frac{\theta(aq^{n-k+2},aq^{k-n}/b;p)^k}
{\theta(q,bq^{2n-2k+3};p)^k}
\frac{(q^{n+1};q^{-1},p)_k (bq^{n-2k+3};q,p)_k}
{(aq;q^{-1},p)_k (aq/b;q,p)_k}x^{n-k+1},
\end{align*}
and 
\begin{align*}
&x \mathcal D_{a,b;q,p}^k (c(a,b;q,p)x^n)\\&
=c(aq^{1-k},bq^{2-2k};q,p)
\frac{\theta(aq^{n-k+2},aq^{k-n}/b;p)^k}
{\theta(q,bq^{2n-2k+3};p)^k}
\frac{(q^n;q^{-1},p)_k (bq^{n-2k+4};q,p)_k}
{(aq^2;q^{-1},p)_k (a/b;q,p)_k}x^{n-k+1},
\end{align*}
hence
\begin{align}\label{eqn:prop3.5lhs}
&(\mathcal D_{a,b;q,p}^k x -x\mathcal D_{a,b;q,p}^k)
(c(a,b;q,p)x^n)\notag\\
&=c(a q^{1-k},bq^{2-2k};q,p)
\frac{\theta(aq^{n-k+2},aq^{k-n}/b;p)^k}
{\theta(q,bq^{2n-2k+3};p)^k}
\frac{(q^n;q^{-1},p)_{k-1} (bq^{n-2k+4};q,p)_{k-1}}
{(aq^2;q^{-1},p)_{k+1} (a/b;q,p)_{k+1}}\notag\\&\quad\,\times
\left(\theta(q^{n+1},aq^2,bq^{n-2k+3},a/b;p)-
\theta(q^{n-k+1},aq^{2-k},bq^{n-k+3},aq^k/b;p)
\right)x^{n-k+1}\notag\\
&=c(a q^{1-k},bq^{2-2k};q,p)
\frac{\theta(aq^{n-k+2},aq^{k-n}/b;p)^k}
{\theta(q,bq^{2n-2k+3};p)^k}
\frac{(q^n;q^{-1},p)_{k-1} (bq^{n-2k+4};q,p)_{k-1}}
{(aq^2;q^{-1},p)_{k+1} (a/b;q,p)_{k+1}}\notag\\&\quad\,\times
\theta(q^k,aq^{n-k+3},bq^{2-k},aq^{k-n-1}/b;p)
q^{n-k+1}x^{n-k+1},
\end{align}
where in the last step, we have applied the
addition formula for theta functions \eqref{addf}.
On the other hand, 
\begin{equation*}
\begin{bmatrix}k\\1\end{bmatrix}_{bq^{2-2k},aq^{1-k};q,p}=
\frac{\theta(q^k,aq^{3-k},bq^{2-k},aq^{k-1}/b;p)}
{\theta(q,aq^2,bq^{3-2k},a/b;p)}q^{1-k}
\end{equation*}
and 
\begin{align*}
\mathcal D_{a,b;q,p}^{k-1}&\big(\eta_{a,b;q,p}(c(a,b;q,p)x^n)\big)=
c(aq^{1-k},bq^{2-2k};q,p)\\&\times
\frac{\theta(aq^{2+n-k},aq^{3+n-k},bq^{3-2k},aq^{k-n}/b,aq^{k-n-1}/b;p)}
{\theta(aq^{2-k},aq^{3-k},bq^{3+2n-2k},aq^k/b,aq^{k-1}/b;p)}q^n\\
&\times
\frac{\theta(aq^{n-k+2},aq^{k-n}/b;p)^{k-1}}
{\theta(q,bq^{2n-2k+3};p)^{k-1}}
\frac{(q^n;q^{-1},p)_{k-1} (bq^{n-2k+4};q,p)_{k-1}}
{(aq;q^{-1},p)_{k-1} (aq/b;q,p)_{k-1}}x^{n-k+1}.
\end{align*}
It is easy to combine these expressions to confirm that
$$\begin{bmatrix}k\\1\end{bmatrix}_{bq^{2-2k},aq^{1-k};q,p}
\mathcal D_{a,b;q,p}^{k-1}\big(\eta_{a,b;q,p}(c(a,b;q,p)x^n)\big)$$ equals 
\eqref{eqn:prop3.5lhs}.
\end{proof}


\subsubsection{Elliptic Fibonacci numbers}


Recall the sequence of polynomials in $q$ considered by Schur \cite{HW}
defined by
\begin{equation}\label{eqn:qFibonacci}
S_n (q) =S_{n-1}(q)+q^{n-2}S_{n-2}(q),\qquad \text{ for } n>1,
\end{equation}
with $S_n (0)=0$ and $S_n (1)=1$. These can be specialized to the 
sequence of ordinary Fibonacci numbers when $q=1$. 
We provide an elliptic extension of this family of $q$-Fibonacci
polynomials by adding the parameters $a,b,p$ with $|p|<1$
($p$ being the nome).

\begin{definition}
The elliptic Fibonacci numbers $S_{n}(a,b;q,p)$ are defined
by the following recursion 
\begin{align}\label{eqn:ellFrec}
 S_n(a,b;q,p) &=S_{n-1}(aq,bq^2;q,p)\\[.2em]
&\quad+
\frac{\theta(aq^{1+n},aq^{2+n},bq^5,aq^{1-n}/b,aq^{-n}/b;p)}
{\theta(aq^3,aq^4,bq^{1+2n},a/bq,a/bq^2;p)}q^{n-2}
S_{n-2}(a q^2,bq^4;q,p),\notag\\
\text{with }~ S_0 (a,b;q,p)&=0,\quad S_1 (a,b;q,p)=1.\notag
\end{align}

\end{definition}

\begin{proposition}\label{prop:ellF}
Let $xf(a,b;p)=f(aq,bq^2;p)x$ for any function which is
multiplicatively $p$-periodic in $a$ and in $b$. Then
the elliptic Fibonacci numbers $S_{n}(a,b;q,p)$ have the
formal generating function
\begin{equation*}
\sum_{n=0}^\infty S_n (a,b;q,p) x^n =\big(1-x-x^2 \eta_{a,b;q,p}\big)^{-1}x,
\end{equation*}
where $\eta_{a,b;q,p}$ is the linear operator defined in \eqref{def:eta}.
\end{proposition}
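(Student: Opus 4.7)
Set $F(x):=\sum_{n\ge 0}S_n(a,b;q,p)\,x^n$, viewed as an element of the formal power series completion of $\mathbb{C}_{a,b;q,p}[x]$. The proposed identity is equivalent, after multiplying on the left by $1-x-x^2\eta_{a,b;q,p}$, to the operator equation
$$
(1-x-x^2\eta_{a,b;q,p})\,F(x)=x.
$$
The plan is to verify this by computing the pieces $xF(x)$ and $x^2\eta_{a,b;q,p}F(x)$ separately and comparing $x^n$-coefficients against the defining recursion \eqref{eqn:ellFrec}.

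For $xF(x)$, I would push $x$ past each $S_n(a,b;q,p)$ using the commutation hypothesis $xf(a,b;p)=f(aq,bq^2;p)x$, obtaining $xF(x)=\sum_{n\ge 1}S_{n-1}(aq,bq^2;q,p)\,x^n$. For $x^2\eta_{a,b;q,p}F(x)$, I would first apply $\eta_{a,b;q,p}$ term by term via \eqref{def:eta} and then slide the prefactor $x^2$ through both the resulting theta ratio and through $S_n(a,b;q,p)$, effecting the double shift $a\mapsto aq^2$, $b\mapsto bq^4$. After the reindexing $n\mapsto n-2$, the arguments $aq^{1+n},aq^{2+n},bq,aq^{1-n}/b,aq^{-n}/b$ in the numerator and $aq,aq^2,bq^{1+2n},aq/b,a/b$ in the denominator of \eqref{def:eta} turn into $aq^{n+1},aq^{n+2},bq^5,aq^{1-n}/b,aq^{-n}/b$ and $aq^3,aq^4,bq^{2n+1},aq^{-1}/b,aq^{-2}/b$ respectively, and the resulting expansion reads
$$
x^2\eta_{a,b;q,p}F(x)=\sum_{n\ge 2}\frac{\theta(aq^{n+1},aq^{n+2},bq^5,aq^{1-n}/b,aq^{-n}/b;p)}{\theta(aq^3,aq^4,bq^{2n+1},aq^{-1}/b,aq^{-2}/b;p)}\,q^{n-2}\,S_{n-2}(aq^2,bq^4;q,p)\,x^n.
$$

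Comparing coefficients then finishes the proof: the $x^0$-coefficient is $S_0=0$ (matching the right-hand side), the $x^1$-coefficient is $S_1=1$ (matching the $x$ on the right-hand side, with no contribution from the other two pieces), and for $n\ge 2$ the $x^n$-coefficient is exactly \eqref{eqn:ellFrec} rewritten so as to equal zero, after the trivial identifications $aq^{-1}/b=a/bq$ and $aq^{-2}/b=a/bq^2$. The only step requiring any care is the bookkeeping of the $a\mapsto aq^2$, $b\mapsto bq^4$ shift through the theta ratio coming from $\eta_{a,b;q,p}$; once that substitution is written out, the agreement with \eqref{eqn:ellFrec} is immediate and the remainder is purely formal.
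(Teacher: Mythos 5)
Your proposal is correct and follows essentially the same route as the paper: apply $1-x-x^2\eta_{a,b;q,p}$ to the generating function, push $x$ and $x^2$ through the coefficients via the shifts $a\mapsto aq$, $b\mapsto bq^2$ (resp.\ $a\mapsto aq^2$, $b\mapsto bq^4$), reindex, and cancel the $x^n$-coefficients for $n\ge 2$ using the recursion \eqref{eqn:ellFrec}. Your bookkeeping of the shifted theta arguments matches the paper's computation exactly, so nothing further is needed.
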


\begin{proof}
Let $\mathcal S (x)$ denote the left-hand side of the equation, i.e.,
\begin{equation*}
\mathcal S(x) =\sum_{n=0}^\infty S_n (a,b;q,p) x^n .
\end{equation*}
Then
\begin{align*}
 &(1-x-x^2 \eta_{a,b;q,p})\mathcal S(x) \\
& = x + \sum_{n=2}^\infty S_n (a,b;q,p) x^n
-\sum_{n=0}^\infty S_n (aq,bq^2;q,p) x^{n+1}\\
&\qquad -\sum_{n=0}^\infty S_n (a q^2,bq^4;q,p)
\frac{\theta(aq^{3+n},aq^{4+n},bq^5,aq^{-1-n}/b,aq^{-2-n}/b;p)}
{\theta(aq^3,aq^4,bq^{5+2n},a/bq,a/bq^2;p)}q^n x^{n+2}\\
& = x + \sum_{n=2}^\infty S_n (a,b;q,p) x^n -\sum_{n=2}^\infty
S_{n-1} (aq,bq^2;q,p) x^{n} \\
&\qquad -\sum_{n=2}^\infty S_{n-2} (a q^2,bq^4;q,p)
\frac{\theta(aq^{1+n},aq^{2+n},bq^5,aq^{1-n}/b,aq^{-n}/b;p)}
{\theta(aq^3,aq^4,bq^{1+2n},a/bq,a/bq^2;p)}q^{n-2} x^n\\
& = x + \sum_{n=2}^\infty
\bigg(S_n(a,b;q,p) -S_{n-1}(aq,bq^2;q,p)\\
&\qquad\qquad\qquad -
\frac{\theta(aq^{1+n},aq^{2+n},bq^5,aq^{1-n}/b,aq^{-n}/b;p)}
{\theta(aq^3,aq^4,bq^{1+2n},a/bq,a/bq^2;p)}q^{n-2}
 S_{n-2} (a q^2,bq^4;q,p)\bigg)x^n\\
&= x+\sum_{n=2}^\infty 0\cdot x^n=x,
\end{align*}
by the recurrence relation, which settles the claim.
\end{proof}

Proposition~\ref{prop:ellF} clearly extends the classical
relation for the ordinary Fibonacci numbers
\begin{equation*}
\sum_{n=0}^\infty F_nx^n=\frac x{1-x-x^2}.
\end{equation*}

It is well known that the Fibonacci numbers admit
the following explicit formula:
\begin{equation}\label{fibexp}
F_n=\sum_{j\ge 0}\binom{n-1-j}j.
\end{equation}
Is there a generalization with extra parameters?

While we were not able to extend \eqref{fibexp} to the elliptic setting,
we have succeeded in establishing an $a;q$ version.
For this, we consider the $p\to 0$, followed by $b\to 0$
special case of the elliptic Fibonacci numbers,
which we label as $S_n(a;q)$.
These satisfy the recursion
\begin{equation}\label{eqn:aqFrec}
 S_n(a;q) =S_{n-1}(aq;q) +
\frac{(1-aq^{1+n})(1-aq^{2+n})}
{(1-aq^3)(1-aq^4)}q^{2-n}
S_{n-2}(a q^2;q)
\end{equation}
with $S_0 (a;q)=0$, $S_1 (a;q)=1$. 
We assume that $x$ is a variable that shifts $a$ by $q$,
i.e., we have $xa=qax$.
Let the linear operator $\eta_{a;q}$ be defined by
\begin{equation}
\eta_{a;q}(c(a;q)x^n)=c(a;q)
\frac{(1-aq^{1+n})(1-aq^{2+n})}
{(1-aq)(1-aq^2)}q^{-n} x^n,
\end{equation}
where $c(a;q)$ depends on $a$ and $q$ but is independent from $x$.
Then we have the following auxiliary result:
\begin{lemma}
\begin{equation*}
(x+x^2 \eta_{a;q})^n x = \left(\sum_{j=0}^n q^{-nj}
\begin{bmatrix}n\\j\end{bmatrix}_q \frac{(1-a q^{n+j+2})^j
(1-a q^{n+j+3})^j}{(aq^3 ;q)_j (a q^{n+3};q)_j} x^j\right) x^{n+1}.
\end{equation*}
\end{lemma}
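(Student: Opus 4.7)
The plan is to induct on $n$. The base case $n=0$ reads $x=x$ and is immediate. For the inductive step, assume the formula for $n$ and denote the right-hand side by $\sum_{j=0}^n c_j^{(n)}(a)\,x^{j+n+1}$, where
\begin{equation*}
c_j^{(n)}(a):=q^{-nj}\begin{bmatrix}n\\j\end{bmatrix}_q
\frac{(1-aq^{n+j+2})^j(1-aq^{n+j+3})^j}{(aq^3;q)_j(aq^{n+3};q)_j}.
\end{equation*}
Apply $(x+x^2\eta_{a;q})$ on the left. Using the commutation relation $xa=qax$, which propagates through $c_j^{(n)}$ to give $x\cdot c_j^{(n)}(a)=c_j^{(n)}(aq)\,x$, together with the defining action of $\eta_{a;q}$, each summand $c_j^{(n)}(a)\,x^{j+n+1}$ contributes $c_j^{(n)}(aq)\,x^{j+n+2}$ through the $x$-factor and
\begin{equation*}
c_j^{(n)}(aq^2)\,\frac{(1-aq^{j+n+4})(1-aq^{j+n+5})}{(1-aq^3)(1-aq^4)}\,q^{-(j+n+1)}\,x^{j+n+3}
\end{equation*}
through the $x^2\eta_{a;q}$-factor. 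Extracting the coefficient of $x^{J+n+2}$ (to which only the terms $j=J$ and $j=J-1$ contribute) and equating it with $c_J^{(n+1)}(a)$ reduces the inductive step to a single scalar identity in the parameter $a$.

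Cleaning up that identity uses only the elementary $q$-shifted factorial rewrites $(aq^4;q)_J(1-aq^3)=(aq^3;q)_J(1-aq^{J+3})$, $(aq^5;q)_{J-1}(1-aq^3)(1-aq^4)=(aq^3;q)_{J+1}$, and $(aq^{n+5};q)_{J-1}(1-aq^{n+4})=(aq^{n+4};q)_J$. After factoring out the common prefactor $(1-aq^{n+J+3})^J(1-aq^{n+J+4})^J/\bigl((aq^3;q)_J(aq^{n+4};q)_J(1-aq^{J+3})\bigr)$, the scalar identity collapses to
\begin{equation*}
\begin{bmatrix}n\\J\end{bmatrix}_q (1-aq^3)+q^{-J}\begin{bmatrix}n\\J-1\end{bmatrix}_q (1-aq^{n+4})
=q^{-J}\begin{bmatrix}n+1\\J\end{bmatrix}_q (1-aq^{J+3}).
\end{equation*}
Being affine-linear in $a$, this splits into two independent checks: the constant term in $a$ is equivalent to $\begin{bmatrix}n+1\\J\end{bmatrix}_q=q^J\begin{bmatrix}n\\J\end{bmatrix}_q+\begin{bmatrix}n\\J-1\end{bmatrix}_q$, and the coefficient of $a$ is equivalent to $\begin{bmatrix}n+1\\J\end{bmatrix}_q=\begin{bmatrix}n\\J\end{bmatrix}_q+q^{n+1-J}\begin{bmatrix}n\\J-1\end{bmatrix}_q$, i.e., the two $q$-Pascal recurrences recorded right after the definition of the $q$-binomial coefficients in Section~3.

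I expect the main obstacle to be purely organizational: the two operators produce several $q$-shifted factorials carrying the parameter $a$ shifted by $q$, $q^2$, or not at all, and these must be carefully collated into the common prefactor above. Once the bookkeeping is done, the induction closes in one line by the two classical $q$-Pascal recurrences, with no deeper $q$-series identity needed.
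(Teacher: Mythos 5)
Your proof is correct and follows exactly the route the paper indicates: the paper states only that ``the lemma can be proved by induction'' and omits all details, and your induction on $n$ supplies them. I verified the bookkeeping: the coefficient of $x^{J+n+2}$ does reduce, after the stated $q$-shifted-factorial rewrites, to the affine-linear identity in $a$ whose constant and linear parts are precisely the two $q$-Pascal recurrences, so the inductive step closes as you claim.
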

The lemma can be proved by induction. We omit the details. 

We are now ready to give our $a;q$-extension of \eqref{fibexp}:
\begin{corollary}
\begin{equation*}
S_n (a;q)=\sum_{j\ge 0}q^{-(n-j-1)j}\begin{bmatrix} n-j-1\\j\end{bmatrix}_q
\frac{(1-a q^{n+1})^{j}(1-a q^{n+2})^j}{(a q^3;q)_j (a q^{n-j-2};q)_j}.
\end{equation*}
\end{corollary}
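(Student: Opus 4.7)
The plan is to use the generating function identity from Proposition~\ref{prop:ellF}, taken in the $p\to 0$, then $b\to 0$ specialization, which reads $\sum_{n\ge 0}S_n(a;q)x^n=(1-x-x^2\eta_{a;q})^{-1}x$. Expanding the inverse as a formal geometric series in the noncommutative ring yields
\[
\sum_{n\ge 0}S_n(a;q)\,x^n=\sum_{m\ge 0}(x+x^2\eta_{a;q})^m\,x,
\]
and each summand has already been put into closed form by the preceding lemma.

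Substituting that expression gives
\[
\sum_{n\ge 0}S_n(a;q)\,x^n=\sum_{m\ge 0}\sum_{j=0}^{m}q^{-mj}\begin{bmatrix}m\\j\end{bmatrix}_q\frac{(1-aq^{m+j+2})^j(1-aq^{m+j+3})^j}{(aq^3;q)_j(aq^{m+3};q)_j}\,x^{m+j+1}.
\]
The coefficient of $x^n$ on the right-hand side is then read off by setting $n=m+j+1$, i.e.\ $m=n-j-1$; the constraint $0\le j\le m$ becomes $0\le j\le (n-1)/2$, which can be relaxed to $j\ge 0$ since the $q$-binomial coefficient $\begin{bmatrix}n-j-1\\j\end{bmatrix}_q$ vanishes when $j>n-j-1$. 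Substituting $m=n-j-1$ into the power of $q$ and into each of the Pochhammer factors turns the inner sum into the expression stated in the corollary.

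Two points deserve care. First, coefficient extraction in $x^n$ is unambiguous because both sides are written in the normal form in which all $a,q$-coefficients stand to the left of a pure power of $x$; the lemma is formulated precisely in this form, which is what makes term-by-term matching legitimate. Second, the real work lies upstream, in the inductive proof of the lemma itself, where one must carefully commute $\eta_{a;q}$ past the outer $x$'s via the rule $xa=qax$ and telescope the resulting Pochhammer products. Once the lemma is granted, the argument above is essentially bookkeeping in the substitution $m\leftrightarrow n-j-1$.
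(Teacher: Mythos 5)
Your proposal follows exactly the paper's own proof: expand $(1-x-x^2\eta_{a;q})^{-1}x$ as $\sum_{m\ge 0}(x+x^2\eta_{a;q})^m x$, invoke the preceding lemma for the closed form of each summand, and reindex with $m=n-j-1$ to read off the coefficient of $x^n$. One small caveat: the substitution actually produces $(aq^{n-j+2};q)_j$ in the denominator (as in the last display of the paper's proof), not the $(aq^{n-j-2};q)_j$ printed in the corollary's statement, so your claim that the substitution ``turns the inner sum into the expression stated'' silently passes over what appears to be a typo in the stated formula.
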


\begin{proof}
By the $p\to 0$ followed by $b\to 0$ case of Proposition~\ref{prop:ellF},  
\begin{align*}
\sum_{n=0}^\infty S_n (a;q)x^n &= \big(1-x-x^2 \eta_{a;q}\big)^{-1}x\\
&= \sum_{n=0}^\infty (x+x^2 \eta_{a;q})^n x\\
&= \sum_{n=0}^\infty \left(\sum_{j=0}^n q^{-nj}\begin{bmatrix}n\\j\end{bmatrix}_q
\frac{(1-a q^{n+j+2})^j (1-a q^{n+j+3})^j}{(aq^3 ;q)_j (a q^{n+3};q)_j} x^{n+j+1} 
\right)\\
&= \sum_{n=0}^\infty \left(\sum_{j\ge 0} q^{-(n-j-1)j}
\begin{bmatrix}n-j-1\\j\end{bmatrix}_q \frac{(1-a q^{n+1})^j
(1-a q^{n+2})^j}{(aq^3 ;q)_j (a q^{n-j+2};q)_j} x^{n}  \right).
\end{align*}
The corollary follows by taking coefficients of $x^n$
from both sides of the identity.
\end{proof}


\section{Normal ordering problem}


The Weyl algebra is the algebra generated by $x$ and $y$,
with the commutation relation 
$yx =xy+1$. For an element $\alpha$ in the Weyl algebra, the sum 
\begin{equation*}
\alpha=\sum_{i,j}c_{i,j}x^i y^j
\end{equation*}
is called the \emph{normally ordered form} of $\alpha$ and the
coefficients $c_{i,j}$ are 
called the \emph{normal order coefficients} of $\alpha$.
Considering $\alpha$ as a word, 
$\alpha=\alpha_1 \alpha_2\cdots \alpha_n$, $\alpha_i \in \{ x,y\}$,
$\alpha$ represents a Ferrers diagram drawn
by realizing $x$ as a horizontal step and $y$ as a vertical step.
See Figure~\ref{fig:diagram}
for an example of a Ferrers diagram outlined by a word $\alpha=xyxxyxyy$. 
\begin{figure}[ht]
\begin{equation*}
\begin{picture}(60,60)(0,0)
   \multiput(0,0)(0,10){1}{\line(1,0){14}}
   \multiput(15,0)(0,10){1}{\line(0,1){14}}
    \multiput(15,15)(0,10){1}{\line(1,0){14}}
     \multiput(30,15)(0,10){1}{\line(1,0){14}}
      \multiput(45,15)(0,10){1}{\line(0,1){14}}
       \multiput(45,30)(0,10){1}{\line(1,0){14}}
        \multiput(60,30)(0,10){1}{\line(0,1){14}}
        \multiput(60,45)(0,10){1}{\line(0,1){14}}
 \multiput(15,0)(2,0){23}{\line(1,0){1}} 
  \multiput(60,0)(0,2){15}{\line(0,1){1}}  
  \put(3,1){$x$} 
  \put(16, 5){$y$}
  \put(20, 16){$x$}
  \put(35, 16){$x$}
  \put(46, 20){$y$}
  \put(50, 31){$x$}
  \put(61, 35){$y$}
  \put(61, 50){$y$}
  \end{picture}
\end{equation*}
\caption{A Ferrers diagram $B_\alpha$ outlined by a word
$\alpha=xyxxyxyy$}\label{fig:diagram}
 \end{figure}
In \cite{Na}, Navon
showed that the normal order coefficients of a word $\alpha$ are
rook numbers on the Ferrers  board outlined by $\alpha$.
(For the definition of a Ferrers board, see Subsection~\ref{subsec:rook}.)
More precisely, let $B_\alpha$ denote the Ferrers diagram outlined 
by $\alpha$, and let $r_k (B_\alpha)$ count the number of ways of choosing
$k$ cells in $B_\alpha$  such that no two cells are in the same column
or in the same row. Then, for a word $\alpha$ composed of $m$ $x$'s and
$n$ $y$'s, the normally ordered form of $\alpha$ is 
\begin{equation}
\alpha=\sum_{k=0}^{\min (m,n)} r_k (B_\alpha) x^{m-k}y^{n-k}.
\end{equation}
For example, in the case of $\alpha=xyxxyxyy$, it is easy to compute 
\begin{align*}
& r_0 (B_\alpha) =1,\\
& r_1 (B_\alpha)=4,\\
& r_2 (B_\alpha) =2,
\end{align*}
and $r_3 (B_\alpha)=0$, $r_4 (B_\alpha)=0$. Hence we have 
$$xyxxyxyy = x^4 y^4 + 4x^3 y^3+2 x^2 y^2.$$
For an excellent survey on the normal ordering of words in the Weyl algebra,
with many references to the literature, see \cite{MS1}.

Here, we extend the normal ordering problem by considering
the following (slightly modified) system of commutation relations,
\begin{subequations}\label{eqn:rookrel}
\begin{align}
yx &= w(1,1)xy+1,\\
x\;\!w(s,t)&= w(s+1,t)x,\\
y\;\!w(s,t)&= w(s,t+1)y,
\end{align}
\end{subequations}
where $(w(s,t))_{s,t\in\mathbb{N}}$ is a doubly-indexed sequence
of indeterminates. This system is just like the one in \eqref{eqn:noncommrel}
but with the first relation altered by having added $1$ to its right-hand side.

We consider the $\mathbb N \times \mathbb N$ grid.
We label the columns from left to right with $1,2,3\dots $ and the
rows from bottom to top  with $1,2,3, \dots$. We use $(i,j)$ to denote
the cell in the $i$-th column from the left and the $j$-th row from
the bottom, and we assign the weight $w(i,j)$ to the $(i,j)$-cell. 
Given a Ferrers board $B$, we say that we place $k$ nonattacking rooks
in $B$ for choosing a $k$-subset of cells in $B$ such that no two cells
lie in the same row or in the same column.
Let $\mathcal N _k (B)$ denote the set of all nonattacking placements
of $k$ rooks in $B$. Given a placement $P\in \mathcal N_k (B)$,
a rook in $P$ \emph{cancels} all the cells to the right in the same row and
all the cells below it in the same column. Then we define
the weighted rook polynomial by 
\begin{equation}
r_k (w;B) =\sum_{P\in \mathcal N_k (B)}
\left(\prod_{(s,t)\in U_B(P)}w(s-r_{P,(s,t)},t)
\right), 
\end{equation}
where $U_B(P)$ is the set of cells in $B-P$ which are uncancelled by
any rooks in $P$ and $r_{P,(i,j)}$ is the number of rooks in the
north-west region of the cell in $(i,j)$. 

\begin{theorem}\label{thm:wnormalorder}
Let $\alpha$ be an element composed of $m$ x's and $n$ y's where
$x$ and $y$ are subject to the relations in \eqref{eqn:rookrel}. 
Then the normally ordered form of $\alpha$ is  
\begin{equation}\label{eqn:normalorder}
\alpha =\sum_{k=0}^{\min (m,n)} r_k (w;B_\alpha)x^{m-k} y^{n-k}.
\end{equation}
\end{theorem}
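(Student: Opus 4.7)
My plan is to proceed by induction on the pair $(m+n,|B_\alpha|)$ ordered lexicographically. The base case is $|B_\alpha|=0$, where $\alpha=x^my^n$ is already in normal form and the only contributing placement in \eqref{eqn:normalorder} is the empty one, with empty-product weight~$1$; both sides equal $x^my^n$.

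For the inductive step, suppose $\alpha$ contains a $yx$ substring and write $\alpha=u\,yx\,v$, where $u$ has $a$ copies of $x$ and $b$ copies of $y$. Under the bijection between inversions and cells, this $yx$ corresponds to the cell $(a+1,b+1)\in B_\alpha$. Applying the first relation of \eqref{eqn:rookrel} and then using the shift relations to move $w(1,1)$ leftward past~$u$, one obtains
\begin{equation*}
\alpha \;=\; w(a+1,b+1)\,\alpha' \;+\; \alpha'',
\qquad
\alpha' := u\,xy\,v,\ \ \alpha'' := u\,v.
\end{equation*}
Here $\alpha'$ has the same length as $\alpha$ but one fewer inversion, and $\alpha''$ is shorter by two letters, so the induction hypothesis applies to both. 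Moreover $B_{\alpha'}=B_\alpha\setminus\{(a+1,b+1)\}$ and $B_{\alpha''}$ is obtained from $B_\alpha$ by deleting column $a+1$ and row $b+1$ (with the obvious relabeling).

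Comparing coefficients of $x^{m-k}y^{n-k}$ in the identity $\alpha=w(a+1,b+1)\alpha'+\alpha''$ reduces the theorem to the weighted rook recursion
\begin{equation*}
r_k(w;B_\alpha) \;=\; w(a+1,b+1)\,r_k(w;B_{\alpha'}) \;+\; r_{k-1}(w;B_{\alpha''}),
\end{equation*}
which I would establish by splitting $\mathcal N_k(B_\alpha)$ according to whether the cell $(a+1,b+1)$ holds a rook. Since $B_\alpha$ has no cell in column $a+1$ strictly above $(a+1,b+1)$ and no cell in row $b+1$ strictly west of it (both failures of inversion are immediate from the form $\alpha=u\,yx\,v$), a placement avoiding this cell leaves it uncancelled with weight exactly $w(a+1,b+1)$, accounting for the first summand; a placement occupying the cell cancels everything else in its row and column, and the surviving $k-1$ rooks lie on what becomes $B_{\alpha''}$ after the deletion.

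The principal obstacle is the weight bookkeeping in the second bijection: the column shift $s\mapsto s-r_{P,(s,t)}$ must transport correctly under the column and row deletion. Here one uses, crucially, the identity $w(s,t)=w(s+1,t+1)$, which the consistency check $(yx)\,w(s,t)=y\bigl(x\,w(s,t)\bigr)$ forces within the algebra defined by \eqref{eqn:rookrel} (comparing the two reductions of $(yx)\,w(s,t)$ after normalizing to $xy$-form yields this identity as a consequence of the given relations). This identity precisely compensates for the simultaneous downward shift of row and column indices for uncancelled cells of $B_\alpha$ under the deletion, so the weight of each surviving cell on the $B_\alpha$ side agrees with that of its image on the $B_{\alpha''}$ side, the recursion follows, and the induction closes.
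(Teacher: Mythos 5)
Your argument is correct and shares the paper's skeleton---peel off a $yx$, write $\alpha=w(a+1,b+1)\,\alpha'+\alpha''$, and close the induction with the rook-number recursion obtained by conditioning on whether the corner cell carries a rook---but it differs in one substantive way: the paper always reduces the \emph{rightmost} $yx$, while you reduce an arbitrary one. For the rightmost choice every column to the right of the corner $(s',t')$ has height exactly $t'$, so after placing a rook on the corner no surviving cell lies strictly north-east of it; the weight transport under the row-and-column deletion is then handled entirely by the drop of $r_{P,(s,t)}$ by one for cells to the east, no identity among the $w(s,t)$ is needed, and the coefficients come out literally as the polynomials $r_k(w;B_\alpha)$ in free indeterminates. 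With an arbitrary $yx$ you do get surviving cells north-east of the corner, whose row and column indices both decrease under the deletion, and there you genuinely need $w(s,t)=w(s+1,t+1)$. Your derivation of that identity from the overlap $y\bigl(x\,w(s,t)\bigr)=(yx)\,w(s,t)$ is valid, and it is in fact exactly the confluence condition underlying the paper's unproved assertion that the normal form is independent of the order in which the relations are applied; for truly free $w(s,t)$ that assertion fails (already for $yxyx$ the $xy$-coefficient is $w(1,1)w(2,2)+2w(1,1)$ or $w(1,1)w(2,2)+w(1,1)+w(2,2)$ according to which $yx$ is reduced first). So your route buys an explicit justification of order-independence, at the mild cost of pinning down the coefficients only modulo $w(s,t)=w(s+1,t+1)$---harmless, since that relation holds in the algebra and is satisfied by the weights $w(s,t)=w_{a,b;q,p}(s-t)$ used in the paper's application to Theorem~\ref{thm:aqrook}. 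If you write this up, spell out the three sub-cases of the weight transport (cells west of the deleted column need nothing; cells east and below the corner need only the shift of $r_{P,(s,t)}$; cells east and above need the diagonal identity), and note in the no-rook case that deleting the corner cell changes neither the cancellation status nor the north-west counts of the remaining cells.
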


\begin{proof}
First of all, it is not difficult to see that the normal ordered form of any
word $\alpha$ with respect to the system \eqref{eqn:rookrel} is unique
(and does not depend on the order the commutation relations are performed). 
Thus we may carry out the proof by induction on the number of cells
in $B_\alpha$.
The relation $yx=w(1,1)xy+1$ describes the situation when the board has only
one box in $(1,1)$. If we do not place any rook, then the rook
polynomial $r_0 (w;B_{yx})=w(1,1)$, and if we place a rook in the $(1,1)$ cell, 
then $r_1 (w;B_{yx})=1$. More generally, let $\alpha =x^{l-1}yx$.
(Note that $y$'s coming after $yx$ do not change the shape of
the Ferrers board outlined by $\alpha$, so we may omit them without loss of generality.) Then $x^{l-1}yx = w(l,1) x^l y + x^{l-1}$.
The Ferrers board outlined by $\alpha$ has only one cell with the
weight $w(l,1)$ and the coefficients match 
to the rook polynomials of the cases $k=0$ or $k=1$, respectively.

Now consider $\alpha$ such that the Ferrers board $B_\alpha$ has more
than two cells. We assume the theorem holds for words corresponding
to Ferrers boards with a smaller number of cells than $B_\alpha$. 
Let us find the right-most $yx$ in $\alpha$ and write $\alpha$ 
as $\alpha = \alpha ' yx \alpha''$, where $\alpha''$ would be of
the form $x^s y^t$. 
If we apply the commutation relation to $yx$, then we get 
\begin{align*}
\alpha &= \alpha' yx \alpha''\\
&=\alpha' (w(1,1)xy +1)\alpha'' \\
&=w(s', t')\alpha' xy \alpha'' +\alpha' \alpha'',
\end{align*}
where $s'-1$ is the number of $x$'s in $\alpha'$ and $t'-1$ is the
number of $y$'s in $\alpha'$. The Ferrers diagram outlined by
$\alpha' xy \alpha''$ is the diagram obtained by deleting the 
cell $(s',t')$ from $B_\alpha$, and $\alpha' \alpha''$ corresponds
to the Ferrers diagram of $B_\alpha$ after removing the column and
row containing the cell $(s', t')$. 
\begin{figure}[ht]
\begin{tikzpicture}
\draw[blue, very thick] (0,0) -- (0,.35) --(.35, .35)--(.35,.7)--(1.05,.7)--(1.05, 1.4)--(1.4, 1.4)--(1.4, 1.75)--(2.1,1.75)--(2.1,2.45)--(3.5,2.45);
\draw (0,0)--(3.5,0)--(3.5,3.15);
\draw (2.1,2.1)--(2.45, 2.1) --(2.45,2.45);
\draw[snake=brace, thick]   (0, 2.1) -- (2.05, 2.1);
\draw[snake=brace, thick] (2.45, 2.52) -- (3.47, 2.52);
\node (c) at (2.28, 2.28) {c};
\node (a) at (1.1, 2.5) {$\alpha'$};
\node (b) at (3.08, 2.9) {$\alpha''$};
\end{tikzpicture}
\qquad\quad
\begin{tikzpicture}
\draw[blue, very thick] (0,0) -- (0,.35) --(.35, .35)--(.35,.7)--(1.05,.7)--(1.05, 1.4)--(1.4, 1.4)--(1.4, 1.75)--(2.1,1.75)--(2.1,2.1)--(2.45,2.1)--(2.45,2.45)--(3.5,2.45);
\draw (0,0)--(3.5,0)--(3.5,3.15);
\draw[densely dashed] (2.1,2.1)--(2.1, 2.45) --(2.45,2.45);
\draw[snake=brace, thick]   (0, 2.1) -- (2.05, 2.1);
\draw[snake=brace, thick] (2.45, 2.52) -- (3.47, 2.52);
\node (c) at (2.28, 2.28) {c};
\node (a) at (1.1, 2.5) {$\alpha'$};
\node (b) at (3.08, 2.9) {$\alpha''$};
\end{tikzpicture}
\qquad\quad
\begin{tikzpicture}
\draw[blue, very thick] (0,0) -- (0,.35) --(.35, .35)--(.35,.7)--(1.05,.7)--(1.05, 1.4)--(1.4, 1.4)--(1.4, 1.75)--(2.1,1.75)--(2.1,2.1);
\draw[blue, very thick] (2.45,2.45)--(3.5,2.45);
\draw[densely dashed] (2.1,0)--(2.1, 1.75);
\draw[densely dashed] (2.45,0)--(2.45, 2.45);
\draw[densely dashed] (2.1,2.1)--(3.5, 2.1);
\draw (0,0)--(3.5,0)--(3.5,3.15);
\draw[densely dashed] (2.1,2.1)--(2.1, 2.45) --(2.45,2.45);
\draw[snake=brace, thick]   (0, 2.1) -- (2.05, 2.1);
\draw[snake=brace, thick] (2.45, 2.52) -- (3.47, 2.52);
\node (c) at (2.28, 2.28) {c};
\node (a) at (1.1, 2.5) {$\alpha'$};
\node (b) at (3.08, 2.9) {$\alpha''$};
\end{tikzpicture}
\caption{$B_\alpha$, $B_{\alpha' xy \alpha''}$ and $B_{\alpha'\alpha''}$,
from the left.}\label{fig:F}
\end{figure}

Figure~\ref{fig:F} shows diagrams corresponding to $\alpha$,
$\alpha xy\alpha''$ and $\alpha'\alpha''$, from the left.
In Figure~\ref{fig:F}, $c$ denotes the $(s', t')$ cell. 
As we can see in Figure~\ref{fig:F}, $B_{\alpha' xy\alpha''}$ and
$B_{\alpha'\alpha''}$ have a smaller number of cells than $B_\alpha$ and so we
can use the induction hypothesis. Thus
\begin{align*}
\alpha &= w(s', t')\alpha' xy\alpha'' +\alpha' \alpha''\\
&= w(s', t')\left(\sum_{k=0}^{\min(m,n)}r_k (w;B_{\alpha' xy\alpha''})x^{m-k}y^{n-k}
\right)\\
&\qquad  +\sum_{k=0}^{\min (m,n)-1}r_k (w;B_{\alpha' \alpha''})x^{m-1-k}y^{n-1-k}\\
&=\underbrace{ w(s', t')r_0
(w;B_{\alpha' xy \alpha''})}_{=\prod_{(i,j)\in B_\alpha}w(i,j)}x^m y^n\\
&\qquad +\sum_{k=1}^{\min (m,n)}\left(w(s', t')r_k (w; B_{\alpha' xy \alpha''})
+r_{k-1}(w;B_{\alpha' \alpha''}) \right)x^{m-k}y^{n-k}\\
&= \sum_{k=0}^{\min (m,n)}r_k (w;B_{\alpha}) x^{m-k}y^{n-k},
\end{align*}
since $r_k (w;B_{\alpha})=w(s', t')r_k (w; B_{\alpha' xy \alpha''})+
r_{k-1}(w;B_{\alpha' \alpha''})$
constitutes a recursion by distinguishing between the
cases when there is a rook in the $(s', t')$ cell or not.
\end{proof}


\subsection{Extension of rook theory}\label{subsec:rook}


A \emph{board} is a finite subset of $\mathbb N \times \mathbb N$.
Let $b_1,\dots,b_n\in\mathbb N_0$.
We use the notation $B(b_1,\dots, b_n)$ to denote the set of cells 
\begin{equation*}
B(b_1,\dots, b_n)=\{(i,j) ~|~ 1\le i \le n,~ 1\le j\le b_i\}.
\end{equation*}
In the special case when the 
$b_i$'s are nondecreasing, i.e., $b_1\le b_2\le \cdots \le b_n$,
the board is called a \emph{Ferrers board}.
Recall that $\mathcal{N}_{k}(B)$ denotes the set of all $k$-rook placements
in $B$ such that no two rooks lie in the same row or column.
Garsia and Remmel~\cite{GR} introduced a $q$-analogue of the rook numbers
for Ferrers boards,
\begin{displaymath}
r_k (q;B)=\sum_{P\in \mathcal{N}_k (B)}q^{|U_B(P)|},
\end{displaymath}
where $|U_B(P)|$ counts the number of uncancelled cells in $B-P$,
and they proved the
product formula
\begin{equation}\label{qrookthm}
\prod_{i=1}^n [z+b_i-i+1]_q =
\sum_{k=0}^n r_{k}(q;B)[z]_q\!\downarrow_{n-k},
\end{equation} 
where $B=B(b_1,\dots, b_n)$, $[n]_q =\frac{1-q^n}{1-q}$ and
$[n]_q\!\downarrow_k=[n]_q [n-1]_q \dots [n-k+1]_q$.
The formula in \eqref{qrookthm} is a $q$-analogue 
of the product formula shown by Goldman, Joichi and White \cite{GJW}.

In \eqref{eqn:rookrel}, in the case when $w(s,t)=q$ for all $s$ and $t$,
the variables $x$ and $y$ generate the $q$-Weyl algebra.
Theorem~\ref{thm:wnormalorder}
can be used to prove the product formula \eqref{qrookthm} by choosing
$y=D_q$ where
\begin{equation*}
D_q (f(t))=\frac{f(qt)-f(t)}{(q-1)t}
\end{equation*} 
and $x$ is the operator acting by multiplication by $t$.
The product formula \eqref{qrookthm} is then obtained by applying the
left- and the right-hand sides of \eqref{eqn:normalorder}
to $t^z$ \cite{Var}. In \cite{SY}, we have established an elliptic
analogue of rook numbers for Ferrers boards
by assigning elliptic weights to cells of the Ferrers board
and proved an elliptic analogue of the product formula.
Here we utilize Theorem~\ref{thm:wnormalorder}
to prove the elliptic analogue of the product formula in a different way.
To make the notation simpler, we let 
\begin{align*}
w_{a,b;q,p}(k)&:= w_{a,b;q,p}(1,k)=\frac{\theta(aq^{2k+1},bq^{k},aq^{k-2}/b;p)}
{\theta(aq^{2k-1},bq^{k+2},aq^k /b;p)}q,\\
W_{a,b;q,p}(k) &:= W_{a,b;q,p}(1,k)=
\frac{\theta(a q^{1+2k},bq, bq^2, a q^{-1}/b, a/b;p)}
{\theta(aq, bq^{k+1},bq^{k+2},aq^k/b;p)}q^k,\\ 
[z]_{a,b;q,p}&:=\begin{bmatrix}z\\
1\end{bmatrix}_{a,b;q,p}=\frac{\theta(q^z, aq^z, bq^2, a/b;p)}
{\theta(q, aq, bq^{z+1},aq^{z-1}/b;p)}.
\end{align*}
Note that 
\begin{equation}
w_{a,b;q,p}(j+k)=w_{aq^{2j},bq^{j};q,p}(k),
\end{equation}
which we later make use of.

\begin{definition}
Given a Ferrers board $B$, we define the elliptic analogue of the
$k$-rook number by 
\begin{displaymath}\label{def:elptrook}
r_k(a,b;q,p;B)=\sum_{P\in \mathcal{N}_k(B)}\text{wt}(P),
\end{displaymath}
where 
\begin{displaymath}\text{wt}(P)=\prod_{(i,j)\in U_B(P)}
w_{a,b;q,p}(i-j-r_{P,(i,j)}),
\end{displaymath}
and $r_{P,(i,j)}$
is the number of rooks in $P$ which are in the north-west region of $(i,j)$. 
\end{definition}
Then in \cite[Theorem~12]{SY}, it was proved that $r_k(a,b;q,p;B)$ satisfies
an analogous identity to the product formula in \eqref{qrookthm}.
The proof proceeded along the lines of Garsia and Remmel's \cite{GR} proof
of \eqref{qrookthm} suitably adapted to the elliptic setting.
Here we use Theorem~\ref{thm:wnormalorder} to provide a different proof
of the elliptic product formula.
For this theorem, we assume that a Ferrers board $B=B(b_1,\dots, b_n)$
with $n$ columns 
is contained in $[n]\times [n]$ grid i.e., $b_n\le n$.
This can be achieved if needed by starting with sufficiently many
columns of height zero.

\begin{theorem}\label{thm:aqrook}
For a Ferrers board $B=B(b_1,\dots, b_n$), with
$b_1\le b_2 \le \cdots \le b_n$, we have 
\begin{align}\label{eqn:aqrook}
&\prod_{i=1}^n [z+b_i -i+1]_{aq^{2(i-1-b_i)},bq^{i-1-b_i};q,p}\notag\\
&=\sum_{k=0}^{n}r_{n-k}(a,b;q,p;B)\prod_{j=1}^k
[z-j+1]_{aq^{2(j-1)},b q^{j-1};q,p}.
\end{align}
\end{theorem}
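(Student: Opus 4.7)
The plan is to apply Theorem~\ref{thm:wnormalorder} to a word $\alpha_B$ that encodes the Ferrers board $B=B(b_1,\dots,b_n)$ in the algebra with commutation relations~\eqref{eqn:rookrel} specialised to $w=w_{a,b;q,p}$, and then to evaluate the resulting identity on a suitable elliptic ``monomial'' $t^z$.

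Concretely, I would take
\[
\alpha_B \;=\; y^{b_1}\,x\,y^{b_2-b_1}\,x\,\cdots\, y^{b_n-b_{n-1}}\,x\,y^{\,n-b_n},
\]
which contains $n$ letters $x$ and $n$ letters $y$ and whose outlined Ferrers board (in the sense of Section~4) is precisely $B$. Theorem~\ref{thm:wnormalorder} applied with $w=w_{a,b;q,p}$ then yields
\[
\alpha_B \;=\; \sum_{k=0}^{n} r_{n-k}(a,b;q,p;B)\,x^{k}\,y^{k}
\]
after the reindexing $k\mapsto n-k$ in~\eqref{eqn:normalorder}.

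Next, I would realise $x$ and $y$ as concrete operators acting on the elliptic powers $t^{z}$: the operator $x$ acts by multiplication by $t$ together with the parameter shift $(a,b)\mapsto(aq,bq^{2})$, and $y$ acts by $y\cdot t^{z}=[z]_{a,b;q,p}\,t^{z-1}$ together with the shift $(a,b)\mapsto(aq^{2},bq)$. The only nontrivial relation to verify is the ``$+1$'' relation $yx=w_{a,b;q,p}(1,1)\,xy+1$, which reduces on $t^{z}$ to the theta-function identity
\[
[z+1]_{a,b;q,p} \;=\; w_{a,b;q,p}(1,1)\,[z]_{aq,bq^{2};q,p}\;+\;1,
\]
a direct consequence of the addition formula~\eqref{addf}.

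Applying both sides of the normal-ordered identity to $t^{z}$ and carefully tracking how the shifts induced by $x$ and $y$ propagate (via \eqref{wshift}--\eqref{Wshift}), the direct evaluation of $\alpha_B$ collapses to $\prod_{i=1}^{n}[z+b_i-i+1]_{aq^{2(i-1-b_i)},\,bq^{i-1-b_i};q,p}\cdot t^{z}$, while the normal-ordered sum gives $\sum_{k=0}^{n}r_{n-k}(a,b;q,p;B)\prod_{j=1}^{k}[z-j+1]_{aq^{2(j-1)},\,bq^{j-1};q,p}\cdot t^{z}$; cancelling the common factor of $t^{z}$ yields~\eqref{eqn:aqrook}. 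The main obstacle is the shift bookkeeping in this last step: one must match each $y$-operation in $\alpha_B$ to the corresponding column index $i$ of $B$ (accounting for all preceding $x$'s and $y$'s) so that the accumulated shift collapses to the particular exponent $i-1-b_i$ appearing in the left-hand factor. Once this combinatorial matching is set up and the underlying theta identity established via~\eqref{addf}, the remainder of the proof is mechanical.
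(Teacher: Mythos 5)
Your overall strategy is the paper's: encode $B$ as a word $\alpha_B$ (your word $y^{b_1}xy^{b_2-b_1}x\cdots xy^{n-b_n}$ is exactly the boundary word the paper uses), normal-order it via Theorem~\ref{thm:wnormalorder}, realize $x$ and $y$ as operators on $t^z$, and compare the two evaluations. However, your operator realization is wrong, and this is a genuine gap rather than a bookkeeping detail. The weight specialization needed to make $r_k(w;B_\alpha)$ equal the elliptic rook numbers $r_k(a,b;q,p;B)$ of Definition~\ref{def:elptrook} is $w(s,t)=w_{a,b;q,p}(s-t)$ (the one-variable weight $w_{a,b;q,p}(k)=w_{a,b;q,p}(1,k)$), so the relevant relations are \eqref{eqn:rookrelweyl}: $x$ must send $w_{a,b;q,p}(k)$ to $w_{a,b;q,p}(k+1)$, i.e.\ shift $(a,b)\mapsto(aq^2,bq)$, and $y$ must send it to $w_{a,b;q,p}(k-1)$, i.e.\ shift $(a,b)\mapsto(aq^{-2},bq^{-1})$. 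You instead assign $x:(a,b)\mapsto(aq,bq^2)$ and $y:(a,b)\mapsto(aq^2,bq)$, which are the shifts of the binomial-theorem algebra \eqref{abqxy}, not of the rook algebra. With your shifts the constant term also comes out wrong: the relation is $yx=w_{a,b;q,p}(0)xy+1$ with $w_{a,b;q,p}(0)=w_{a,b;q,p}(1,0)$, not $w_{a,b;q,p}(1,1)$, and your proposed key identity $[z+1]_{a,b;q,p}=w_{a,b;q,p}(1,1)[z]_{aq,bq^2;q,p}+1$ is simply false. Already in the $p,b\to 0$ limit its left-hand side minus its first right-hand term equals
\begin{equation*}
\frac{(1-aq^{z+1})(1-aq^{2-z})}{(1-aq)(1-aq^2)},
\end{equation*}
which is $1$ only for $z\in\{0,1\}$. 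The correct identity (which does follow from \eqref{addf}) is $[z+1]_{aq^{-2},bq^{-1};q,p}-w_{a,b;q,p}(0)[z]_{a,b;q,p}=1$, and it forces the downward shifts on $y$; without them the claimed left-hand product with parameters $aq^{2(i-1-b_i)},bq^{i-1-b_i}$ cannot emerge.

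A second, smaller omission: even with the correct operators, the direct evaluation of $\alpha_B$ on $t^z$ naturally produces $\prod_{j=1}^n[z+c_j-j+1]$ with shifted parameters, where $c_1,\dots,c_n$ are the row lengths of $B$ (the conjugate board), because it is the $y$'s, indexed by rows, that generate the factors. One must then invoke the conjugation symmetry of Ferrers boards (the multiset $\{c_j-j\}$ equals $\{b_i-i\}$) to rewrite this as the product over the column heights $b_i$ appearing in \eqref{eqn:aqrook}. Your proposal attributes this step to mechanical shift-tracking, but it is a separate combinatorial observation that needs to be stated.
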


\begin{proof}
For the specific elliptic weight used to define 
the elliptic analogue of the rook number in Definition~\ref{def:elptrook},
assume the noncommuting variables $x$ and $y$ to satisfy the system
of relations in \eqref{eqn:rookrel} where $w(s,t):=w_{a,b;q,p}(s-t)$, 
specifically, 
\begin{subequations}\label{eqn:rookrelweyl}
\begin{align}
yx& = w_{a,b;q,p}(0)xy+1,\\
x\;\! w_{a,b;q,p}(k) &= w_{a,b;q,p}(k+1)x,\\
y\;\! w_{a,b;q,p}(k)& = w_{a,b;q,p}(k-1)y.
\end{align}
\end{subequations}
Now, define a differential operator acting on polynomials in $t$ by 
\begin{equation*}
\mathcal D_{a,b;q,p}(f(a,b;q,p)\,t^z)=
f(a q^{-2}, b q^{-1};q,p)[z]_{a q^{-2}, bq^{-1};q,p}\,t^{z-1}
\end{equation*}
and let $y$ be the linear operator $\mathcal D_{a,b;q,p}$.
Let $x$ be an operator 
acting by multiplying by $t$ and shifting $a$ by $q^2$ and $b$ by $q$, i.e., 
\begin{equation*}
x(f(a,b;q,p)\,t^z) =f(a q^2, bq;q,p)\,t^{z+1}.
\end{equation*}
Then, using
\begin{equation*}
[z+1]_{aq^{-2},bq^{-1};q,p}-w_{a,b;q,p}(0)[z]_{a,b;q,p}=1
\end{equation*}
(which follows from the addition formula \eqref{addf}),
we can check that the $x$ and $y$ operators satisfy the three
relations in \eqref{eqn:rookrelweyl}.

Any Ferrers diagram $B\subseteq [n]\times [n]$ corresponds to a
word $\alpha_B$ in the letters $x$ and $y$. In fact, the
Ferrers boards is outlined by a path
starting from the bottom-left corner up to the 
top-right corner of $[n]\times [n]$ grid,
consisting of horizontal and vertical steps,
which are assigned the letters $x$ and $y$, respectively.
For instance, the Ferrers diagram 
$B(1,2,2,3)$ in Figure \eqref{fig:wd} corresponds to the word
$\alpha_B =yxyxxyxy$.
\begin{figure}[ht]
\begin{equation*}
\begin{picture}(70,60)(0,0)
\multiput(0,0)(0,15){5}{\line(1,0){60}}
\multiput(0,0)(15,0){5}{\line(0,1){60}}
\thicklines \linethickness{1.3pt}
\multiput(0,0)(0,15){1}{\line(0,1){15}}
\multiput(0,15)(0,15){1}{\line(1,0){15}}
\multiput(15,15)(0,15){1}{\line(0,1){15}}
\multiput(15,30)(0,15){1}{\line(1,0){30}}
\multiput(45,30)(0,15){1}{\line(0,1){15}}
\multiput(45,45)(0,15){1}{\line(1,0){15}}
\multiput(60,45)(0,15){1}{\line(0,1){15}}
\put(-7,5){$y$}
\put(17, 20){$y$}
\put(47, 35){$y$}
\put(62, 50){$y$}
\put(4,17){$x$}
\put(20,32){$x$}
\put(35,32){$x$}
\put(50,47){$x$}
\end{picture}
\end{equation*}
\caption{$B(1,2,2,3)$ }\label{fig:wd}
\end{figure}
Note that the assumption $B\subseteq [n]\times [n]$ implies that
the corresponding word $\alpha_B$ has $n$ $x$'s and $n$ $y$'s.
By Theorem~\ref{thm:wnormalorder}
we have 
\begin{equation}\label{eqn:alphaB}
\alpha_B =\sum_{k=0}^n r_{n-k} (a,b;q,p;B) x^{k} y^{k}
\end{equation}
We apply each side of \eqref{eqn:alphaB} to $t^z$. 

If we apply $x^k y^k$ to $t^z$, then we get
\begin{equation*}
x^k y^k(t^z)=[z]_{a,b;q,p}[z-1]_{aq^2, bq;q,p} \cdots
[z-k+1]_{aq^{2(k-1),bq^{k-1};q,p}}t^z
\end{equation*}
after applying successively all the $y$'s and the $x$'s to the left of $t^k$.
Hence, applying the right-hand side of \eqref{eqn:alphaB} to $t^z$ gives
\begin{equation*}
\sum_{k=0}^n r_{n-k} (a,b;q,p;B) [z]_{a,b;q,p}[z-1]_{aq^2, bq;q,p} \cdots
[z-k+1]_{aq^{2(k-1),bq^{k-1};q,p}}t^z.
\end{equation*}

Let us consider applying the left-hand side of \eqref{eqn:alphaB} to $t^z$.
Let $\tilde{B}$ denote the conjugate Ferrers diagram of $B$, obtained
from the diagram $B$ by reflecting it about the the anti-diagonal
in the $[n]\times [n]$ grid.
If $\tilde{B}=B(c_1,\dots, c_n)$, then $c_i$ is the
number of cells in the $i$th row of $B$, reading from the top.
Now we apply $\alpha$ to $t^z$. Note that applying each $y$ produces 
a factor and applying $x$ increases the exponent of $t$, while the 
number of $x$'s to the right of the $j$-th $y$ from the right
is equal to $c_j$. 
Then applying the $j$-th $y$ from the right to $t^z$ produces a factor 
$[z+c_j-j+1]_{aq^{-2},bq^{-1};q,p}$, and since there are $(n-j)$ $y$'s and 
$(n-c_j)$ $x$'s to the left of $[z+c_j-j+1]_{aq^{-2},bq^{-1};q,p}$, 
the factor becomes $[z+c_j -j+1]_{aq^{-2(c_j -j+1)},bq^{-(c_j -j+1)};q,p}$.
Hence, applying the left-hand side of \eqref{eqn:alphaB} to $t^z$ gives 
\begin{equation*}
\prod_{i=1}^n [z+c_j -j+1]_{aq^{-2(c_j -j+1)},bq^{-(c_j -j+1)};q,p }t^z.
\end{equation*}
Since
$$\prod_{i=1}^n [z+c_j -j+1]_{aq^{-2(c_j -j+1)},bq^{-(c_j -j+1)};q,p }=
\prod_{i=1}^n [z+b_j -j+1]_{aq^{-2(b_j -j+1)},bq^{-(b_j -j+1)};q,p }$$
for $B(b_1,\dots, b_n)$ and $B(c_1,\dots, c_n)$ being conjugates of
each other, by taking coefficients of $t^z$ from both sides,
we have proved the theorem.
\end{proof}




In \cite{SY}, we have also considered file numbers and established an
elliptic analogue for these. 

Given a board $B\subseteq [n]\times [n]$, a \emph{file placement}
$Q$ of $k$ rooks in $B$ is a $k$ subset of $B$ such that no two cells
in $Q$ lie in the same column.
Thus, we allow the placement of multiple rooks in the same row. 
Let $\mathcal F_k (B)$ be the set of all file placements of $k$ rooks in $B$. 
Given a placement $Q\in \mathcal F_k (B)$, a rook in $Q$ cancels all the cells
below it in the same column. Due to this cancellation scheme, we only need to 
assume that the given board is a skyline board. However, since we are
considering
boards outlined by words composed of $x$'s and $y$'s satisfying certain 
commutation relations, we assume that a given board is a Ferrers board.

Given a Ferrers board $B$, define the weighted file polynomial by 
\begin{equation}
f_k (w;B)=\sum_{Q\in \mathcal F_k (B)} \left( \prod_{(s,t)\in U_B(Q)}
w(s-r_{P, (s,t)},t) \right), 
\end{equation}
where $U_B (Q)$ is the set of uncancelled cells in $B-Q$ and $r_{P,(i,j)}$ 
is the number of cells in the north-west region of the cell $(i,j)$. 

Now, we consider a word $\alpha=\alpha_1 \alpha_2\cdots \alpha_n$,
$\alpha_i \in \{x,y\}$,
where $x$ and $y$ satisfy the following commutation relations
\begin{subequations}\label{eqn:wfilerel}
\begin{align}
yx &=w(1,1)xy +y,\\
x\;\! w(s,t)&= w(s+1, t)x,\\
y\;\!w(s,t)&= w(s, t+1)y,
\end{align}
\end{subequations}
where $(w(s,t))_{(s,t)\in \mathbb N^2}$ a doubly-indexed sequence of
indeterminates.
As we did before, we consider a Ferrers diagram $B_\alpha$ outlined by $\alpha$.
\begin{theorem}\label{thm:filenormalorder}
Let $\alpha$ be a word composed of $m$ $x$'s and $n$ $y$'s where
$x$ and $y$ are subject to 
the relations in \eqref{eqn:wfilerel}. Then
\begin{equation}\label{eqn:filenormalorder}
 \alpha = \sum_{k=0}^m f_k (w;B_\alpha) x^{m-k}y^n.
\end{equation}
\end{theorem}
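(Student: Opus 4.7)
The plan is to mirror the inductive proof of Theorem~\ref{thm:wnormalorder}, adjusting for the file-placement cancellation rule and for the modified commutation relation $yx=w(1,1)xy+y$. I would first verify that the normally ordered form of $\alpha$ with respect to \eqref{eqn:wfilerel} is independent of the order in which the commutation relations are applied, so that induction on the number of cells of $B_\alpha$ is well-posed. The base cases $|B_\alpha|\le 1$ are immediate: for $B_\alpha=\emptyset$ the word $\alpha$ is already $x^my^n$ with $f_0=1$, while for $B_\alpha=\{(l,1)\}$ (so $\alpha=x^{l-1}yxy^{n-1}$) a single application of the commutation relation after shifting produces $w(l,1)x^ly^n+x^{l-1}y^n$, matching $f_0=w(l,1)$ and $f_1=1$.

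For the inductive step I would locate the rightmost $yx$ in $\alpha$, write $\alpha=\alpha'yx\alpha''$ with $\alpha''=x^sy^t$, and apply the relation together with the shift rules for $w$ to push the produced $w(1,1)$ leftward through $\alpha'$ (which contains $s'-1$ $x$'s and $t'-1$ $y$'s, turning $w(1,1)$ into $w(s',t')$), giving
\[
\alpha=w(s',t')\,\alpha'xy\alpha''+\alpha'y\alpha''.
\]
Both factors correspond to strictly smaller boards: $B_{\alpha'xy\alpha''}=B_\alpha\setminus\{(s',t')\}$ (since interchanging the $yx$ into $xy$ lowers the height of column $s'$ by one), and $B_{\alpha'y\alpha''}=B(b_1,\ldots,b_{m-1})$, i.e.\ $B_\alpha$ with its last column dropped. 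Applying the inductive hypothesis to each term and collecting coefficients of $x^{m-k}y^n$ yields
\[
\alpha=\sum_{k=0}^m\Bigl[w(s',t')f_k(w;B_{\alpha'xy\alpha''})+f_{k-1}(w;B_{\alpha'y\alpha''})\Bigr]x^{m-k}y^n.
\]

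To finish, the plan is to establish the combinatorial recursion
\[
f_k(w;B_\alpha)=w(s',t')f_k(w;B_{\alpha'xy\alpha''})+f_{k-1}(w;B_{\alpha'y\alpha''})
\]
by splitting the defining sum for $f_k(w;B_\alpha)$ according to whether a rook occupies the corner cell $(s',t')$. If $(s',t')$ is empty, it is automatically uncancelled (as the top of its column in a Ferrers shape, with no cells to its northwest, so its NW rook count vanishes and its weight is exactly $w(s',t')$), and the remaining rooks realize an arbitrary $k$-file placement in $B_\alpha\setminus\{(s',t')\}$. If a rook sits at $(s',t')$, it cancels the rest of column $s'$ and the other $k-1$ rooks lie in $B_\alpha\setminus\text{col }s'$, a labeled Ferrers board agreeing in shape with $B_{\alpha'y\alpha''}$. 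The main obstacle, and the essential point where file placements diverge from rook placements (since row $t'$, unlike in the rook setting, is not removed), is to show that the column relabelling identifying these two boards is exactly compensated, cell-by-cell, by the decrease in the NW rook count $r_{P,(i,j)}$ caused by removing the rook at $(s',t')$. This forces a case split on cells below $t'$ versus cells in row $t'$, and is the technical heart of the argument.
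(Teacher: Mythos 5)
Your proposal follows essentially the same route as the paper's proof: induction on the number of cells of $B_\alpha$, the same one-cell base case, the decomposition $\alpha=\alpha'yx\alpha''$ at the rightmost $yx$ yielding $w(s',t')\alpha'xy\alpha''+\alpha'y\alpha''$, and the concluding recursion $f_k(w;B_\alpha)=w(s',t')f_k(w;B_{\alpha'xy\alpha''})+f_{k-1}(w;B_{\alpha'y\alpha''})$ obtained by splitting on whether a rook occupies $(s',t')$. The only difference is that you spell out the weight bookkeeping (the compensation between column relabelling and the north-west rook count, including the same-row cells east of the rook) that the paper simply asserts; this is a correct and welcome elaboration, not a different argument.
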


\begin{proof}
The proof is similar to the proof of Theorem~\ref{thm:wnormalorder}. 
As we did in the proof of Theorem~\ref{thm:wnormalorder}, 
we may carry out the proof by induction on the number
of cells in $B_\alpha$.
The commutation relation $yx=w(1,1)xy+y$ describes the situation when the
Ferrers diagram has one box in the $(1,1)$ cell. The coefficient
$w(1,1)$ in the first term $w(1,1)xy$ corresponds to $f_0 (w;B_{yx})$
when there is no rook in $B_{yx}$, and the term $y$ comes from
the case when a rook is placed in the $(1,1)$ cell.
The fact that $x$ drops out but $y$ remains can be explained as follows:
in this case the column containing a rook is cancelled by the rook,
but the row is still available (for an additional rook)
even after a rook has been placed there.
Figure~\ref{fig:rel1} depicts this situation. In the right-most
diagram, ``$R$'' means that a rook is placed in the cell. 
\begin{figure}[ht]
\begin{equation*}
\begin{picture}(20,20)(0,0)
 \multiput(5,0)(0,15){1}{\line(1,0){15}}
    \multiput(20,0)(0,15){1}{\line(0,1){15}}
    \put(-2,6){$y$}
    \put(9, 17){$x$}
\thicklines \linethickness{1.28pt}
   \multiput(5,15)(0,15){1}{\line(1,0){15}}
    \multiput(5,0)(0,15){1}{\line(0,1){15}}
  \end{picture}
  \qquad =
  \qquad 
  \begin{picture}(20,20)(0,0)
    \multiput(0,15)(2,0){8}{\line(1,0){1}}
     \multiput(0,0)(0,2){8}{\line(0,1){1}}
   \thicklines \linethickness{1.28pt}
   \multiput(0,0)(0,15){1}{\line(1,0){15}}
    \multiput(15,0)(0,15){1}{\line(0,1){15}}
      \put(17,6){$y$}
    \put(5, 2){$x$}
  \end{picture}
  \qquad +
  \qquad
  \begin{picture}(20,20)(0,0)
    \multiput(0,15)(2,0){8}{\line(1,0){1}}
     \multiput(0,0)(0,2){8}{\line(0,1){1}}
     \multiput(0,0)(2,0){8}{\line(1,0){1}}
   \thicklines \linethickness{1.28pt}
    \multiput(15,0)(0,15){1}{\line(0,1){15}}
      \put(17,6){$y$}
      \put(3,4){$R$}
      \end{picture}
\end{equation*}
\caption{Figure corresponding to the relation $yx=w(1,1)xy+y$}\label{fig:rel1}
\end{figure}

Now assume that \eqref{eqn:filenormalorder} holds for diagrams with
a smaller number of cells than the diagram $B_\alpha$ outlined by $\alpha$. 
We look for the right-most consecutive pair $yx$ in $\alpha$ and denote 
the part left of $yx$ by $\alpha'$ and the part right of $yx$ by $\alpha''$. 
Then we have 
\begin{align*}
\alpha &= \alpha' yx \alpha'' \\
&= \alpha' (w(1,1)xy+y)\alpha'' \\
&= w(s', t')\alpha' xy\alpha'' + \alpha' y \alpha'',
\end{align*}
where $s'-1$ is the number of $x$'s in $\alpha'$ and $t'-1$ is the
number of $y$'s in $\alpha'$.  
Since the diagrams outlined by $\alpha' xy\alpha''$ and $\alpha' y\alpha''$ 
have less number of cells than $B_\alpha$, we can use the induction hypothesis. 
Thus,
\begin{align*}
\alpha &= w(s', t')\sum_{k=0}^m f_k(w;B_{\alpha' xy\alpha''})x^{m-k}y^n
+\sum_{k=0}^{m-1}f_k (w;B_{\alpha' y\alpha''})x^{m-1-k}y^n\\
&=\underbrace{w(s', t')f_0(w;B_{\alpha' xy\alpha''})}_{=\prod_{(i,j)\in B\alpha}w(i,j)}
x^my^n\\&\qquad
 +\sum_{k=1}^m (w(s', t')f_k(w;B_{\alpha' xy\alpha''}) +
f_{k-1} (w;B_{\alpha' y\alpha''}))x^{m-k}y^n\\
&= \sum_{k=0}^m f_k (w;B_\alpha) x^{m-k}y^n,
\end{align*}
since $w(s', t')f_k(w;B_{\alpha' xy\alpha''}) + f_{k-1} (w;B_{\alpha' y\alpha''})$
constitutes a recursion for $f_k (w;B_\alpha)$ by distinguishing
between the cases when 
there is a rook or not in the $(s', t')$-cell. 
\end{proof}

In \cite{SY}, we have defined an elliptic analogue of the $k$-th file
numbers by 
\begin{equation}\label{eqn:wfiledef}
f_k (a,b;q,p;B) =\sum_{Q\in \mathcal F_k (B)}
\left(\prod_{(i,j)\in U_B (Q)}w_{a,b;q,p}(1-j) \right)
\end{equation}
for $B$ being a skyline board,
and proved a product formula involving $f_k (a,b;q,p;B)$ by
considering rook placements in an extended board.
Here, assuming the special case that $B$ is a Ferrers board,
(which corresponds to a word $\alpha_B$ outlining $B$)
we prove the product formula, for which we utilize
Theorem~\ref{thm:filenormalorder}.

\begin{theorem}
For a Ferrers board $B=B(b_1, b_2, \dots, b_n)\subseteq [n]\times [n]$, we have 
\begin{equation}
 \prod_{i=1}^n [z+b_i]_{aq^{-2b_i},bq^{-b_i};q,p}=\sum_{k=0}^n f_{n-k}(a,b;q,p;B)
([z]_{a,b;q,p})^k.
\end{equation}
\end{theorem}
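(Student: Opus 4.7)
The plan is to adapt the operator-theoretic proof of Theorem~\ref{thm:aqrook} to the file setting, now invoking Theorem~\ref{thm:filenormalorder} in place of Theorem~\ref{thm:wnormalorder}. The key structural simplification here is that the file weight $w(s,j):=w_{a,b;q,p}(1-j)$ depends only on the row $j$ (not on the column $s$), so the operator $x$ no longer needs to shift $a$ or $b$, in contrast to the rook setting.

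First I would define operators acting on the span of elements $f(a,b;q,p)\,t^z$ (with $f$ multiplicatively $p$-periodic in $a,b$) by
\begin{align*}
x\bigl(f(a,b;q,p)\,t^z\bigr) &:= f(a,b;q,p)\,[z]_{a,b;q,p}\,t^z,\\
y\bigl(f(a,b;q,p)\,t^z\bigr) &:= f(aq^{-2},bq^{-1};q,p)\,[z]_{aq^{-2},bq^{-1};q,p}\,t^{z-1}.
\end{align*}
The commutation $xw(s,j)=w(s+1,j)x$ is immediate since $x$ does not shift $a,b$ and $w(s,j)$ is independent of $s$; the commutation $yw(s,j)=w(s,j+1)y$ reduces, via the shift formula \eqref{wshift}, to $w_{aq^{-2},bq^{-1};q,p}(1-j)=w_{a,b;q,p}(-j)$. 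The file relation $yx=w_{a,b;q,p}(0)\,xy+y$ reduces, after applying both sides to $f\,t^z$, to the identity
$$[z]_{aq^{-2},bq^{-1};q,p}-w_{a,b;q,p}(0)\,[z-1]_{a,b;q,p}=1,$$
which is the rook identity used in the proof of Theorem~\ref{thm:aqrook} (itself a consequence of the addition formula \eqref{addf}) with $z\mapsto z-1$.

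For a Ferrers board $B\subseteq[n]\times[n]$, let $\alpha_B$ be its outlining word as in the proof of Theorem~\ref{thm:aqrook}, so by Theorem~\ref{thm:filenormalorder}, $\alpha_B=\sum_{k=0}^n f_k(a,b;q,p;B)\,x^{n-k}y^n$. I would apply both sides to $t^z$. A straightforward induction shows
$$y^n t^z=\Bigl(\prod_{l=1}^n[z-n+l]_{aq^{-2l},bq^{-l};q,p}\Bigr)\,t^{z-n},$$
after which each subsequent $x$ merely multiplies by $[z-n]_{a,b;q,p}$ (using the unshifted $a,b$, since $x$ performs no parameter shift), so the normal-form side equals $\bigl(\prod_{l=1}^n[z-n+l]_{aq^{-2l},bq^{-l};q,p}\bigr)\sum_{k=0}^n f_k(a,b;q,p;B)\,[z-n]_{a,b;q,p}^{n-k}\,t^{z-n}$. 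Computing $\alpha_B t^z$ directly (right to left, letter by letter), each contributed factor is subsequently shifted in $(a,b)$ only by the $y$'s to its left in the word. The $k$-th $y$ from the left contributes $[z-n+k]_{aq^{-2k},bq^{-k};q,p}$ (collectively reproducing the factors from $y^n t^z$), and the $x$ of column $i$ is preceded in the word by exactly $b_i$ $y$'s (those raising the path to height $b_i$), so it contributes $[z-n+b_i]_{aq^{-2b_i},bq^{-b_i};q,p}$.

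Equating the two expressions for $\alpha_B t^z$ and cancelling the common factor $\bigl(\prod_{l=1}^n[z-n+l]_{aq^{-2l},bq^{-l};q,p}\bigr)\,t^{z-n}$ yields
$$\prod_{i=1}^n[z-n+b_i]_{aq^{-2b_i},bq^{-b_i};q,p}=\sum_{k=0}^n f_k(a,b;q,p;B)\,[z-n]_{a,b;q,p}^{n-k},$$
and the theorem follows after substituting $z\mapsto z+n$ and reindexing $l=n-k$. The main technical hurdle I anticipate is the parameter-shift bookkeeping in the direct computation of $\alpha_B t^z$, specifically verifying from the Ferrers-path description that the $x$ for column $i$ is indeed preceded in $\alpha_B$ by exactly $b_i$ $y$'s---this is precisely what produces the column-dependent exponent $q^{-2b_i}$ on the LHS of the theorem. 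Once this combinatorial matching is in place, the remaining inputs are just the induction for $y^n t^z$ and the rook identity already at hand.
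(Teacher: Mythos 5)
Your proof is correct and follows essentially the same route as the paper: realize $x$ and $y$ as operators satisfying \eqref{eqn:elptfilerel}, invoke Theorem~\ref{thm:filenormalorder}, and evaluate both sides of the resulting operator identity on a power of $t$. The only (harmless) difference is your choice of $y$: the paper takes the bare shift $y(f(a,b;q,p)\,t^z)=f(aq^{-2},bq^{-1};q,p)\,t^{z-1}$ and applies the identity to $t^{z+n}$ so that $y^n$ contributes no factors, whereas your $y$ carries the extra factor $[z]_{aq^{-2},bq^{-1};q,p}$, which forces you to cancel the common (generically nonzero) product $\prod_{l=1}^n[z-n+l]_{aq^{-2l},bq^{-l};q,p}$ at the end --- both realizations satisfy the commutation relations via the same addition-formula identity.
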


\begin{proof}
Starting from the bottom-left corner or the Ferrers diagram $B$,
read out a word of $x$ and $y$, corresponding to the boundary of $B$
where a horizontal step is $x$ and a vertical step is $y$.
Since we assumed that $B\subseteq [n]\times [n]$, the word $\alpha_B$
outlining the board $B$ has $n$ $x$'s and $n$ $y$'s. If we specify
the weight $w(s,t)$ assigned to the $(s,t)$ cell by
$w(s,t)=w_{a,b;q,p}(1-t):= w_{a,b;q,p}(1,1-t)$, then the commutation relations
in \eqref{eqn:wfilerel} become 
\begin{subequations}\label{eqn:elptfilerel}
\begin{align}
yx &=w_{a,b;q,p}(0)xy +y,\\
x\;\! w_{a,b;q,p}(k)& = w_{a,b;q,p}(k)x,\\
y\;\! w_{a,b;q,p}(k)& = w_{a,b;q,p}(k-1)y.
\end{align}
\end{subequations}
Define $x$ to be an operator acting as 
\begin{equation*}
x(f(a,b;q,p)\,t^z)=f(a,b;q,p)[z]_{a,b;q,p}\,t^z,
\end{equation*}
which could be considered as a composition of the differential operator 
\begin{equation*}
\mathcal D_{a,b;q,p}(f(a,b;q,p)\,t^z)=
f(aq^{-2},bq^{-1};q,p)[z]_{aq^{-2},bq^{-1};q,p}\,t^{z-1}
\end{equation*}
and the multiplication operator by $t$ with the shifts 
\begin{equation*}
f(a,b;q,p)\,t^z \mapsto f(aq^2, bq;q,p)\,t^{z+1},
\end{equation*}
and $y$ to be an operator acting as 
\begin{equation*}
y(f(a,b;q,p)t^z)=f(a q^{-2}, bq^{-1};q,p)t^{z-1}.
\end{equation*}
Using
\begin{equation*}
[z]_{aq^{-2},bq^{-1};q,p}-w_{a,b;q,p}(0)[z-1]_{a,b;q,p}=1
\end{equation*}
we can check that $x$ and $y$ satisfy the relations in
\eqref{eqn:elptfilerel}. Hence by Theorem~\ref{thm:filenormalorder},
we have 
\begin{equation}\label{eqn:fileprod}
\alpha_B = \sum_{k=0}^n f_{n-k} (a,b;q,p;B) x^{k}y^n.
\end{equation}

We now apply each side of \eqref{eqn:fileprod} to $t^{z+n}$. 
First of all, since
\begin{equation*}
x^k y^n (t^{z+n})=x^k(t^z) =([z]_{a,b;q,p})^k t^z,
\end{equation*}
the result of applying the right-hand side of \eqref{eqn:fileprod}
to $t^{z+n}$ is 
\begin{equation}\label{eqn:pfRHS}
 \sum_{k=0}^n f_{n-k} (a,b;q,p;B)([z]_{a,b;q,p})^k \,t^z.
\end{equation}
On the other hand, let us apply $\alpha_B$ to $t^{z+n}$. Consider
the $j$-th $x$ from the right and let $l_j$ be the number of $y$'s
to the right of the $j$-th $x$. Applying the $j$-th $x$ to $t^{z+n}$
gives $[z+n-l_j]_{a,b;q,p}$ and since there are $(n-l_j)$ many $y$'s
to the left of the $j$-th $x$, this factor becomes
$[z+n -l_j]_{aq^{-2(n-l_j)}, bq^{-(n-l_j)};q,p}$ in the end. There are
$n$ $x$'s and each $x$ gives such a factor. Thus applying $\alpha_B$ to 
$t^{z+n}$ gives 
\begin{equation*}
\prod_{j=1}^n [z+n -l_j]_{aq^{-2(n-l_j)}, bq^{-(n-l_j)};q,p}t^z.
\end{equation*}
Notice that $n-l_{n-j+1}$ equals the $j$-th column height $b_j$ of $B$. Hence,
\begin{equation*}\prod_{j=1}^n [z+n -l_j]_{aq^{-2(n-l_j)},
bq^{-(n-l_j)};q,p}t^z=\prod_{j=1}^n [z+b_j]_{aq^{-2b_j}, bq^{-b_j};q,p}t^z.
\end{equation*}
Comparing the coefficients of $t^z$ on both sides, we have proved the theorem.
\end{proof}

\begin{remark}
There is a vast amount of literature on normal ordering.
We only hint at some of the references which are most relevant
for the present paper.
In \cite{BF}, different combinatorial models (involving special graphs
and diagrams) are surveyed and studied to reduce
elements of the Weyl algebra to normally ordered form.
In \cite{MS0}, $q$-weights on Feynman diagrams are used to 
derive normally ordered forms in the $q$-Weyl algebra.
The algebra generated by $x$ and $y$ satisfying $xy=yx+h y^s$,
$h\in\mathbb C$ and $s\in\N_0$,
has been considered in \cite{Var} where the author showed that
the normal order coefficients are the $i$-rook numbers of the $i$-creation
rook model defined by Goldman and Haglund \cite{GH}.
Further cases have been considered in the $q$-commuting case.
In \cite{R} a binomial formula was established for variables
$x$ and $y$ satisfying the quadratic commutation relation
$xy=ax^2+ q yx+h y^2$.
The paper \cite{MS} thoroughly studies the commutation relation
$xy=yx+hf(y)$ for various choices of the functions $f$.
A detailed survey is provided in \cite{MS1};
for more material, see the references given in \cite{BF,MS,MS1}.
It is natural to ask for elliptic extensions of all these results.
\end{remark}


\section{Final remarks}


\begin{enumerate}

\item
The $a;q$-weight can be expressed as 
\begin{displaymath}
w_{a,0;q}(s,t)=\frac{(1-a q^{s+2t})}{(1-a q^{s+2t-2})}q^{-1}=
\frac{q^{-\frac s2-t}/\sqrt{a} -
\sqrt{a} q^{\frac s2+t}}{q^{-\frac s2-t+1}/\sqrt{a}-
\sqrt{a} q^{\frac s2+t-1}}.
\end{displaymath}
(The choice of the square root of $a$ does not matter, as long as it is the
same everywhere.)
If we let $q=e^{ix}$, $\sqrt{a}=e^{i(\alpha+1)x}$, then 
\begin{align*}
w_{a,0;q}(s,t)&=\frac{e^{-i(\alpha+\frac s2+t+1)x}-
e^{i(\alpha+\frac s2+t+1)x}}
{e^{-i(\alpha+\frac s2+t)x}-e^{i(\alpha+\frac s2+t)x}}=
\frac{\sin (\alpha+\frac s2+t+1)x}{\sin (\alpha+\frac s2+t)x}\\
&= \frac{U_{\alpha+\frac s2+t}(x)}{U_{\alpha +\frac s2+t-1}(x)}, 
\end{align*}
where $U_n (\cos \theta)=\frac{\sin (n+1)\theta}{\sin \theta}$ is the
\emph{Chebyshev polynomial of the second kind}. Hence the $a;q$-weight
$w_{a,0;q}(s,t)$ can be considered as a quotient of generalized
Chebyshev polynomials. Thus the identities related to the
$a;q$-weights can be reformulated as identities for Chebyshev polynomials
of the second kind.

\item
The $a;q$-binomial coefficients \eqref{eqn:aqbincoeff} are symmetric in
$(k,n-k)$ (whereas the more general elliptic binomial coefficients
and also the $b;q$-binomial coefficients \eqref{eqn:bqbincoeff}
are not symmetric).
They satisfy
the two recurrence relations
\begin{align*}
\begin{bmatrix}n+1\\k\end{bmatrix}_{a;q}&=
\begin{bmatrix}n\\k\end{bmatrix}_{a;q}+
\frac{(1-aq^{2n+2-k})}{(1-aq^k)}q^{k-n-1}
\begin{bmatrix}n\\k-1\end{bmatrix}_{a;q},\\
\begin{bmatrix}n+1\\k\end{bmatrix}_{a;q}&=
\frac{(1-aq^{n+1+k})}{(1-aq^{n+1-k})}q^{-k}
\begin{bmatrix}n\\k\end{bmatrix}_{a;q}+
\begin{bmatrix}n\\k-1\end{bmatrix}_{a;q},
\end{align*}
which, together with the initial conditions
\begin{equation*}
\begin{bmatrix}0\\0\end{bmatrix}_{a;q}=1,\qquad\text{and}\quad
\begin{bmatrix}n\\k\end{bmatrix}_{a;q}=0,\qquad\text{for
$k>n$ or $k<0$}, \end{equation*}
determine them uniquely.
This symmetry might be reason that combinatorial
enumeration using $a;q$-weights appears to lead more often
to closed forms than combinatorial enumeration using elliptic or $b;q$-weights.
See \cite{SY1} for some
basic hypergeometric series identities proved using
$a;q$-weights.

\item
It is important to realize that every identity involving
the variables $a$, $b$, $x$ and $y$
respecting the commutative relations \eqref{abqxy}
also holds when they are replaced by the variables
$b$, $a$, $y$ and $x$, respectively, due to the symmetry of
\eqref{abqxy}.

In this way we can immediately deduce various additional results such as
$a;q$-versions from corresponding $b;q$-versions (and vice-versa), etc.
To single out a particular result, we note that
the following functional equation of $a;q$-exponentials
\begin{equation}
e_{a;q}(x+y)=e_{a;q}(y)e_{a;q}(x)
\end{equation}
holds where, as in \eqref{bqexp},
\begin{equation}\label{aqexp}
e_{a;q}(z):=\sum_{n=0}^{\infty}\frac{1}{(q;q)_n (aq;q)_n}z^n.
\end{equation}

\item
We were able to establish functional equations for $a;q$- and for
$b;q$-exponentials. As a matter of fact, we were not able to
unify both results and obtain a nice functional equation
for $a,b;q$-exponentials or $a,b;q,p$-exponentials.

\end{enumerate}



\begin{thebibliography}{99}

\bibitem{BF} P.~Blasiak and P.~Flajolet,
``Combinatorial models of creation--annihilation'',
{\em S\'em.\ Lothar.\ de Combin.\ }\textbf{65} (2011), B65c. 

\bibitem{GJW} J.~R.~Goldman, J.~T.~Joichi and D.~E.~White,
``Rook Theory I, Rook equivalence of Ferrers boards",
{\em Proc.\ Amer.\ Math.\ Soc.\ }\textbf{52} (1975), 485--492.

\bibitem{GH} J.~Goldman and J.~Haglund,
``Generalized rook polynomials'',
{\em J.\ Combin.\ Theory Ser.\ A} \textbf{91} (2000), 509--530.

\bibitem{GR} A.~M.~Garsia and J.~B.~Remmel,
``$Q$-counting rook configurations and a formula of Frobenius'',
{\em J.\ Combin.\ Theory Ser.\ A} \textbf{41} (1986), 246--275.

\bibitem{GRhyp} G.~Gasper and M.~Rahman,
{\em Basic hypergeometric series}, second edition,
Encyclopedia of Mathematics and Its Applications~\textbf{96},
Cambridge University Press, Cambridge, 2004.

\bibitem{HW} G.~H.~Hardy and E.~M.~Wright,
{\em  An Introduction to the Theory of Numbers},
$5$th ed., Oxford University Press, Oxford, 1995.

\bibitem{KBSchl}
Z.~R.~Keresk\'enyin\'e Balogh and M.~J.~Schlosser,
``Elliptic Stirling numbers of the second and first kind'',
in preparation.

\bibitem{K} T.~H.~Koornwinder,
``Special functions and $q$-commuting variables'',
in {\em Special functions, $q$-series and related topics}
(Toronto, ON, 1995), pp.~131--166,
{\em Fields Inst.\ Commun.\ } \textbf{14}, Amer.\ Math.\ Soc.,
Providence, RI, 1997.

\bibitem{MS0} T.~Mansour, M.~Schork and S.~Severin,
``Wick's theorem for $q$-deformed boson operators'',
{\em J.\ Phys.\ A: Math. Theoret.\ }\textbf{40} (2007), 8393--8401.

\bibitem{MS} T.~Mansour and M.~Schork,
``The commutation relation $xy=qyx+hf(y)$ and Newton's binomial formula'',
{\em Ramanujan J.} \textbf{25} (2011), 405--445.

\bibitem{MS1} T.~Mansour and M.~Schork,
``Commutation Relations, Normal Ordering, and Stirling Numbers'',
Chapman and Hall/CRC Press, 2015.

\bibitem{Na} A.~M.~Navon,
``Combinatorics and fermion algebra'',
{\em Il Nuovo Cimento B}, \textbf{16}, 2 (1973), 324--330.

\bibitem{R} H.~Rosengren,
``A non-commutative binomial formula'',
{\em J.\ Geom.\ Phys.\ }\textbf{32} (2000), 349--363.

\bibitem{Schl0}  M.~J.~Schlosser,
``Elliptic enumeration of nonintersecting lattice paths'',
{\em J.\ Combin.\ Theory Ser.\ A} \textbf{114} (2007), 505--521.

\bibitem{Schl1}  M.~J.~Schlosser,
``A noncommutative weight-dependent generalization of the binomial theorem'',
preprint \texttt{arXiv:1106.2112}.

\bibitem{SY2} M.~J.~Schlosser and M.~Yoo,
``Some combinatorial identities involving noncommuting variables'',
Proceedings of the Conference
``Formal Power Series and Algebraic Combinatorics'', KAIST, Daejeon,
South Korea, 2015; DMTCS proc.\ FPSAC'15, 2015, 961--972.

\bibitem{SY} M.~J.~Schlosser and M.~Yoo, 
``Elliptic rook and file numbers'',
{\em Electronic J.\ Combin.\ } \textbf{24}(1) (2017), \#P1.31, 47 pp. 

\bibitem{SY1} M.~J.~Schlosser and M.~Yoo, 
`` Basic hypergeometric summations from rook theory'', 
in {\em Analytic Number Theory, Modular Forms and $q$-Hypergeometric Series}
in Honor of Krishna Alladi's 60th birthday, University of Florida, Gainesville,
March 2016 (G.E. Andrews and F. Garvan, eds.),
{\em Springer Proc.\ Math.\ Stat.\ } \textbf{221} (2017), 677--692.

\bibitem{Sb} M.~P.~Sch\"utzenberger,
``Une interpr\'etation de certaines solutions de l'\'equation
fonctionnelle: $F(x+y)=F(x)F(y)$''.
{\em C.\ R.\ Acad.\ Sci.\ Paris} \textbf{236}, (1953), 352--353.

\bibitem{Var} A.~Varvak,
``Rook numbers and the normal ordering problem'',
{\em J.\ Combin.\ Theory Ser.\ A} \textbf{112} (2005), 292--307.

\bibitem{W} H.\ Weber,
\textit{Elliptische Functionen und Algebraische Zahlen}, Vieweg-Verlag,
Braunschweig, 1897.

\bibitem{WhW} E.T.~Whittaker and G.N.~Watson, {\em A Course of
Modern Analysis}, 4th ed., Cambridge University Press, Cambridge, 1962.


\end{thebibliography}
\end{document}